\theoremstyle{plain}
\newtheorem*{maintheorem*}{Main Theorem}
\newtheorem*{thm*}{Theorem}
\newtheorem*{thma*}{Theorem A}
\newtheorem*{thmaa*}{Theorem A'}
\newtheorem*{thmb*}{Theorem B}
\newtheorem*{thmo*}{Theorem 1.1}
\newtheorem*{thmc*}{Theorem C}
\newtheorem*{thmd*}{Theorem D}
\newtheorem*{thmf*}{Theorem 4.1}
\newtheorem*{remark*}{Remark}
\newtheorem{rem}{Remark}
\newtheorem*{conjecture*}{Conjecture}
\newtheorem*{prop*}{Proposition}
\newtheorem*{lem*}{Basic Lemma}
\newtheorem{thm}{Theorem}
\newtheorem{lem}{Lemma}[section]
\newtheorem{prop}[lem]{Proposition}
\newtheorem{cor}[lem]{Corollary}
\theoremstyle{definition}
\newtheorem{defn}{Definition}[section]
\numberwithin{equation}{section}
\def\bbq{\mathbb{Q}}
\def\bbr{\mathbb{R}}
\def\bbh{\mathbb{H}}
\def\tbf{\mathbf{t}}
\def\xbf{\mathbf{x}}
\def\ybf{\mathbf{y}}
\def\Dfrak{\mathfrak{D}}
\def\nxn{n_{\mathbf{x}}^-}
\def\vare{\varepsilon}
\def\g{\Gamma}
\def\h{\hspace{1mm}}
\def\hh{\hspace{.5mm}}
\def\d{{\rm{dist}}}
\def\homog{\g\backslash G}
\def\ba{\backslash}
\newcommand{\norm}[1]{\left\Vert#1\right\Vert}
\newcommand{\set}[1]{\left\{#1\right\}}
\newcommand{\pd}[2]{\frac{\partial #1}{\partial #2}}
\newcommand{\smatrix}[4]{\left(\begin{smallmatrix} #1 & #2\\ #3& #4 \end{smallmatrix}\right)}
\newcommand{\e}{\varepsilon}
\newcommand{\vphi}{{\varphi}}
\newcommand{\calC}{\mathcal{C}}
\newcommand{\calF}{\mathcal{F}}
\newcommand{\calO}{\mathcal{O}}
\newcommand{\bbZ}{\mathbb{Z}}
\newcommand{\bbQ}{\mathbb{Q}}
\newcommand{\bbR}{\mathbb R}
\newcommand{\bbC}{\mathbb C}
\newcommand{\bbN}{\mathbb N}
\newcommand{\SL}{ \mathrm{SL}}
\newcommand{\SO}{ \mathrm{SO}}
\newcommand{\Sl}{ \mathrm{sl}}
\newcommand{\su}{ \mathrm{su}}
\newcommand{\PSL}{ \mathrm{PSL}}
\newcommand{\SU}{ \mathrm{SU}}
\newcommand{\bs}{\backslash}
\newcommand{\id}{1\!\!1}
\newcommand{\frakD}{\mathfrak{D}}
\newcommand{\frakg}{\mathfrak{g}}
\DeclareMathOperator{\Res}{Res}
\newcommand{\lims}{\mathop{\overline{\lim}}}
\title{Logarithm laws for one parameter unipotent flows}
\author{Dubi Kelmer and Amir Mohammadi}%
\address{Department of Mathematics, 301 Carney Hall
Boston College, Chestnut Hill, MA 02467-3806.}
\email{dubi.kelmer@bc.edu}
\address{Department of Mathematics
The University of Texas at Austin
1 University Station, C1200
Austin, TX 78750}
\email{amir@math.utexas.edu}
\begin{document}

\maketitle

\begin{abstract}
We prove logarithm laws and shrinking target properties for unipotent flows on the homogenous space $\Gamma\bs G$ with $G=\SL_2(\bbR)^{r_1}\times\SL_2(\bbC)^{r_2}$ and $\Gamma\subseteq G$ an irreducible non-uniform lattice.
Our method relies on certain estimates for the norms of (incomplete) theta series in this setting.
\end{abstract}

\section*{Introduction}
Let $G$ denote a semisimple Lie group and $\Gamma\subseteq G$ a non-uniform irreducible lattice. Consider the action of an unbounded one parameter subgroup $\{g_t|t\in \bbR\}\subseteq G$ on the space $X=\Gamma\bs G$ endowed with the probability $G$-invariant Haar measure $\sigma$. By Moore's Ergodicity Theorem this action is ergodic on $\homog$ and hence the orbit of $\sigma$-a.e. $x\in X$ becomes equidistributed. In particular, these orbits make excursions far out into the cusps. A natural way of measuring the rate of these excursions is considering the distances $\d(o,xu_t)$ from a fixed point $o\in X$ and asking what is the fastest rate at which they grow for a typical point $x\in X$; note that the ergodicity of the action implies $\lims_{t\to\infty}\d(o,xg_t)=\infty$ for $\sigma$-a.e. $x\in X$.

This problem can be treated as an instance of a shrinking target problem and, as such, an upper bound for the rate of excursions follows from the Borel-Cantelli Lemma. We recall that the Borel-Cantelli Lemma implies that for any sequence $\{B_\ell\}_{\ell\in \bbN}$ of shrinking targets in $X$, if $\sum_{\ell}\sigma(B_\ell)<\infty$, then for $\sigma$-a.e. $x\in X$ the set $\{\ell|xg_\ell\in B_\ell\}$ is finite. If we also assume that the events $xg_\ell\in B_\ell$ and $x g_k\in B_k$ are independent (i.e,  $\sigma(B_\ell g_{-\ell}\cap B_k g_{-k})=\sigma(B_\ell)\sigma(B_k)$), then the converse is also true. In general, the converse is not true without the independence assumption. However, it may hold under additional assumptions on the shrinking sets. A family $\mathcal{B}$ of subsets of $X$ is called a Borel-Cantelli family for $\{g_\ell\}$ if the converse holds for all sets from $\mathcal{B}$. That is, $\mathcal{B}$ is Borel-Cantelli for $\{g_\ell\}$ if for any countable collection of shrinking targets $\{B_\ell\}\subset \mathcal{B}$ with $\sum_{\ell} \sigma(B_\ell)=\infty$, for $\sigma$-a.e. $x\in X$ the set $\{\ell|xg_\ell\in B_\ell\}$ is infinite.

In our setting, we consider targets shrinking to infinity given by
$$B_{r}=\{x\in X|\d(o,x)>r\}.$$
Under an appropriate normalization of the distance function we have that $\sigma(B_r)\asymp e^{-r}$ and hence, the first half of the Borel-Cantelli Lemma together with a standard continuity argument imply that
$\lims_{t\to \infty} \frac{\d(o,xg_t)}{\log(t)}\leq 1$ for $\sigma$-a.e. $x\in X$.
If the family of neighborhoods of infinity $\mathcal{B}=\{B_r|r>0\}$ is Borel-Cantelli for $\{g_\ell\}$ then this bound is sharp
and the flow  $\{g_t\}_{t\in\bbR}$ satisfies the logarithm law, $\lims_{t\to \infty} \frac{\d(o,xg_t)}{\log(t)}=1$ for $\sigma$-a.e. $x\in X$.
We note that if this holds for a lattice $\Gamma$, then it also holds for any commensurable lattice $\Gamma'$ (as the difference between the corresponding distance functions is uniformly bounded).

The case of the geodesic flow on finite volume non-compact hyperbolic manifolds (that is, $\Gamma\backslash \bbh^{m+1}$ with $\Gamma\subseteq \SO(m+1,1)$ a non-uniform lattice) was studied by Sullivan~\cite{Su}. Sullivan utilized a geometric proof Khinchin's theorem on approximation of reals by rationals to prove that the family $\mathcal{B}=\{B_r|r>0\}$ is Borel-Cantelli for the geodesic flow.

The general case of locally symmetric spaces of noncompact type and $\{g_t|t\in \bbR\}$ a one parameter diagonalizable subgroup was proved by Kleinbock and Margulis~\cite{KM2}. Using the exponential rate of mixing of such flows they show that the events $xg_\ell\in B_{r_\ell}$ and $xg_{k}\in B_{r_{k}}$ are (exponentially close to being) independent. Then, using an effective version of the second half of the Borel-Cantelli Lemma they proved that the family $\mathcal{B}$ is Borel-Cantelli for these flows, and hence, diagonalizable flows satisfy the logarithm law. In fact, for this result one can use other distance like function $\Delta:X\to [0,\infty)$, instead of the function $\Delta(x)=\d(o,x)$, where we call a function distance like if (after appropriate normalization) it satisfies $\sigma\{x|\Delta(x)\geq r\}\asymp e^{-r}$.

More recently the case of one parameter unipotent groups has attracted some attention. For unipotent flows, the (polynomial) rate of mixing is not fast enough to obtain the desired effective independence used in the case of diagonalizable flows. Nevertheless, in ~\cite{AM} Athreya and Margulis proved logarithm laws for one parameter unipotent groups acting on $X_n=\SL_n(\bbZ)\bs \SL_n(\bbR)$, with respect to a distance like function $\alpha_1:X_n\to[0,\infty)$, given in terms of the length of the shortest vector in $\Lambda=\bbZ^ng$. To obtain this result they use the interpretation of this space as the space of lattices in $\bbR^n$ and prove a random version of Minkowski's theorem, showing that a large set in $\bbR^n$ intersects most lattices (with respect to normalized Haar measure on $X_n$).

If one considers the action of the full horospherical group (rather than a one parameter unipotent group) it is possible to get much sharper results. Indeed, in \cite{Athreya11} Athreya studied the action of full horospherical group on $X_n$. In this setting, he was able to give a precise result for the rate of excursions for every $x\in X_n$, in terms of certain Diophantine properties of $x$.
In the special case where $G=\SL_2(\bbR)$ the horospherical group is a one parameter group, and hence in these cases the logarithm laws follow from this analysis (for any lattice $\Gamma\subseteq \SL_2(2,\bbR)$).

In this paper, we generalize the approach in ~\cite{AM} to prove logarithm laws for one parameter unipotent flows on more general homogenous spaces $\Gamma\bs G$.
Though this approach should work in general (at least for lattices of $\bbQ$-rank one) it relies on estimates of certain theta functions that we were able to establish so far only for the case where $\Gamma$ is an irreducible lattice in $G=\SL_2(\bbR)^{r_1}\times \SL_2(\bbC)^{r_2}$.

Before we state our results we introduce the following notation. We write $A\lesssim B$ or $A=O(B)$ to indicate that $A\leq cB$
for some constant $c$.  If we wish to emphasize that constant depends on
some parameters we use subscripts, for example
$A\lesssim_\epsilon B$. We also write $A\asymp B$ to indicate that
$A\lesssim B\lesssim A$.

We can now state our main result.
\begin{thm}\label{t:logarithmlaw}
Let $G=\SL_2(\bbR)^{r_1}\times \SL_2(\bbC)^{r_2}$, $\Gamma\subseteq G$ an irreducible lattice, and $K\subset G$ a maximal compact. Let $\d$ denote a distance function on $X=\Gamma\bs G$ obtained from a left $G$-invariant, bi $K$-invariant Reimannian metric on $G$, normalized so that $\sigma(B_r)\asymp e^{-r}$.
Then, for any one-parameter unipotent group $\{u_s\}_{s\in\bbr}\subseteq G$
\begin{equation}\label{e:logarithmlaw} \forall o\in X,\; \mbox{ for $\sigma$-a.e. } x\in X,\quad  \lims_{s\rightarrow\infty}\frac{\d(o,xu_s)}{\log s}=1.\end{equation}
\end{thm}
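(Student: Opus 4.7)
The upper bound $\limsup_{s\to\infty}\d(o,xu_s)/\log s\le 1$ is immediate from the first Borel--Cantelli lemma together with the normalization $\sigma(B_r)\asymp e^{-r}$, so the content of the theorem lies in the matching lower bound. This is equivalent to showing that the family $\mathcal{B}=\{B_r:r>0\}$ is Borel--Cantelli for the discrete flow $\{u_\ell\}_{\ell\in\bbN}$; the passage back to continuous $s$ follows from a standard Lipschitz/continuity argument on fixed-height sublevel sets of $\d(o,\cdot)$. The plan is to follow the Athreya--Margulis scheme, replacing their random Minkowski theorem on $\SL_n(\bbZ)\bs\SL_n(\bbR)$ by a second-moment estimate for \emph{incomplete theta series} adapted to the cusps of $\Gamma\bs G$, which is the main technical input advertised in the abstract.

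Since $X$ has only finitely many cusps and deep neighborhoods of them exhaust $B_r$ for large $r$, one may treat one cusp at a time. Let $Q\subseteq G$ be the parabolic subgroup stabilizing the chosen cusp; then the region $B_r\cap(\text{cusp})$ is parametrized by $(Q\cap\Gamma)\bs Q$ together with a horospherical height coordinate exceeding $\sim e^r$, so $\chi_{B_r}$ can be sandwiched above and below by incomplete theta series
\[
\theta_\phi(x)=\sum_{\gamma\in(Q\cap\Gamma)\bs\Gamma}\phi(x\gamma),
\]
where $\phi$ is $(Q\cap\Gamma)$-invariant and supported at height $\gtrsim e^r$. The standard unfolding identity gives $\int_X\theta_\phi\,d\sigma=\int_{Q\bs G}\phi\asymp e^{-r}$ for a suitably chosen $\phi$. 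The heart of the proof is then a two-step estimate: (a) an $L^2$ norm bound
\[
\bigl\|\theta_\phi-\textstyle\int_X\theta_\phi\bigr\|_{L^2(X)}^2\lesssim\|\phi\|_\star^2
\]
for a suitable Sobolev-type seminorm $\|\cdot\|_\star$, which I would establish by spectral decomposition along the maximal torus in $Q$ and rank-one automorphic input on each $\SL_2$-factor, using the irreducibility of $\Gamma$ to rule out spurious contributions from individual factors; and (b) a quantitative mean-ergodic bound for the unipotent averages $\tfrac{1}{T}\int_0^T\theta_\phi(xu_s)\,ds$ around $\int_X\theta_\phi$ with polynomial decay in $T$, obtained by combining (a) with matrix-coefficient estimates on the $\SL_2$-factor containing $\{u_s\}$.

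Feeding the resulting second-moment estimate into a Gal--Koksma / quantitative Borel--Cantelli lemma (in the form used by Kleinbock--Margulis) yields, for any shrinking sequence $\{B_{r_\ell}\}$ with $\sum_\ell e^{-r_\ell}=\infty$, that $xu_\ell\in B_{r_\ell}$ for infinitely many $\ell$ and $\sigma$-a.e.\ $x$; specializing to $r_\ell=\log\ell$ gives the logarithm law. I expect the principal obstacle to be step (a): a naive bound on $\|\theta_\phi\|_{L^2}^2$ grows quadratically in the cusp depth and produces an error too large to support Borel--Cantelli, so the linear-in-depth bound required here depends on genuine cancellation among the $\Gamma$-translates of $Q$ in the sum. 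It is precisely at this step that restricting to $G=\SL_2(\bbR)^{r_1}\times\SL_2(\bbC)^{r_2}$ (with $\Gamma$ irreducible) enters essentially, since the proof leverages rank-one spectral theory factor by factor. Once (a) is in hand, (b) and the Borel--Cantelli step are essentially mechanical.
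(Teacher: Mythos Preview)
Your step (b) is where the proposal breaks down. As the paper's introduction explicitly notes, the polynomial mixing rate of unipotent flows is \emph{not} fast enough to run the Kleinbock--Margulis scheme: matrix-coefficient decay requires Sobolev control on the test functions, and any smooth approximation to $\id_{B_{r_\ell}}$ has Sobolev norm growing with the cusp depth, so the polynomial decay cannot absorb it and the quasi-independence input to a Gal--Koksma lemma is unavailable. Although you say you are following the Athreya--Margulis scheme, what you have actually sketched in (a)+(b)+quantitative Borel--Cantelli is the Kleinbock--Margulis argument, which is precisely what fails here.

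The paper (genuinely following Athreya--Margulis) uses the $L^2$ bound you call step (a) in a completely different way, with no mixing estimate and no effective Borel--Cantelli. For each $k$ one forms the \emph{union}
\[
\frakD_k\;=\;Q\bs\bigcup_{\ell=k}^{2k} QA^1(r_\ell)K\,u_{-\ell}\ \subseteq\ Q\bs G,\qquad r_\ell=(1-\epsilon)\log\ell,
\]
so that any $x$ with $Q\gamma x\in\frakD_k$ for some $\gamma\in\Gamma$ automatically satisfies $\d(o,xu_\ell)\ge r_\ell+O(1)$ for some $\ell\ge k$. The geometric heart of the argument, and the step missing from your outline, is that because the $K$-components of $u_{-\ell}$ in the Iwasawa decomposition are spread out, these translates overlap very little and $|\frakD_k|\to\infty$; for diagonal flows the same construction gives sets of bounded measure and the argument collapses. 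A single Cauchy--Schwarz together with the theta bound $\|\Theta_f\|^2\lesssim\|f\|_2^2+\|f\|_1^2$ then yields
\[
\sigma(Y_{\frakD_k})\ \ge\ \frac{\bigl(\int_X\Theta_{\id_{\frakD_k}}\,d\sigma\bigr)^2}{\|\Theta_{\id_{\frakD_k}}\|^2}\ \gtrsim\ \frac{|\frakD_k|^2}{|\frakD_k|+|\frakD_k|^2}\ \ge\ \kappa>0,
\]
so $\bigcap_m\bigcup_{k\ge m}Y_{\frakD_k}$ has positive measure; since the $u_s$-invariant set $\{x:\limsup_s \d(o,xu_s)/\log s\ge 1-\epsilon\}$ contains it, ergodicity upgrades this to full measure. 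Incidentally, the bound in (a) is not obtained by the factorwise spectral decomposition you propose but via an explicit formula in terms of the poles of the Eisenstein series, combined with a subgroup-comparison trick that reduces the arithmetic case to $\Gamma=\SL_2(\calO_K)$, where there are no exceptional poles.
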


\begin{rem}
If we assume that $\Gamma$ is arithmetic, then our proof implies the stronger statement that the family $\mathcal{B}$ of neighborhoods of infinity is Borel-Cantelli for $\{u_\ell\}$.
\end{rem}

For the reader's convenience we give a brief outline of the proof.
As a first step we note that it is enough to find for every $\epsilon>0$ a set $Y=Y_\epsilon$ of positive measure such that
 \begin{equation}\label{e:Yeps}
\forall x\in Y, \quad \lims_{t\rightarrow\infty}\frac{\d(o,xu_t)}{\log t}\geq 1-\epsilon.\end{equation}
Indeed, the set
$\{x|\lims_{t\rightarrow\infty}\frac{\d(o,xu_t)}{\log t}\geq 1-\epsilon\}$
is invariant under the flow and hence, from ergodicity, if it has positive measure it must have full measure.
Next, in order to construct such a set, it is enough to find a sequence of sets $Y_k\subseteq X$ satisfying that their measures are uniformly bounded from below, and that
\begin{equation}\label{e:Yk}
\forall \;x\in Y_k \; \exists\;  \ell\geq k\mbox{ such that } \frac{\d(o,xu_\ell)}{\log \ell}\geq 1-\epsilon.
\end{equation}
Indeed, in that case the set $Y=\cap_{\ell=1}^\infty \cup_{k=\ell}^\infty Y_k$ will have positive measure and satisfy \eqref{e:Yeps}.

Finally, we construct the sets $Y_k$ explicitly by taking appropriate unions of translations of neighborhoods of the cusp at infinity.
To describe this construction we need some additional notation. Let $P\subseteq G$ denote the maximal parabolic subgroup of upper triangular matrices and let $\Gamma_\infty=\Gamma\cap P$ denote the stabilizer of the cusp at infinity. Let $Q\subseteq P$ be the maximal subgroup containing $\Gamma_\infty$ such that $\Gamma_\infty\bs Q$ is relatively compact; see \eqref{e:Q} below. In section \ref{s:construct} we construct an explicit sequence of sets  $\Dfrak_k\subseteq Q\bs G$ such that the sets $Y_{\Dfrak_k}$ satisfy \eqref{e:Yk} where for any $\frakD\subseteq Q\bs G$ we let
$$Y_{\Dfrak}=\{\Gamma g\in \Gamma\bs G|\; Q\gamma g\in \Dfrak \mbox{ for some } \gamma\in \Gamma\}.$$
Moreover, it follows from our construction that $|\frakD_k|\to \infty$ where $|\frakD_k|$ denotes the measure of $\Dfrak_k$ with respect to the Haar measure on $Q\bs G$. It is worth mentioning that this is where unipotent flow is used most crucially. Indeed if one carries the same construction for diagonalizable flows then $|\frakD_k|$ will remain ``small". The reason we get large measure sets for unipotent flow is that the $K$-parts of the unipotent group in the Iwasawa decomposition, $G=KP,$ are ``far" from each other. This prevents large overlaps of the translate of a small region and results in large measure sets, see Section~\ref{sec;vol} for more details.

In order to complete the proof, all we need to show that the sets $Y_{\Dfrak_k}$ constructed above do not have measures shrinking to zero.
A standard way to control the measure of sets $Y_\frakD\subseteq \Gamma\bs G$ obtained from sets $\Dfrak\subseteq Q\bs G$ by lifting to $G$ and then folding back into $\Gamma\bs G$, comes from analysis of a corresponding incomplete theta series. Specifically,
to any compactly supported function $f\in C_c(Q\bs G)$ the corresponding theta function\footnote{These functions have different names in the literature. They are sometimes referred to as incomplete theta series as well as incomplete Eisenstein series or pseudo Eisenstein series. In this work we will use the (not so standard but shorter term) theta function.}, $\Theta_f\in L^2(\Gamma\bs G)$, is defined by
\begin{equation*}
\Theta_f(g)=\sum_{\gamma\in \Gamma_\infty\bs \Gamma}f(\gamma g).
\end{equation*}
Note that if $f$ is supported on $\Dfrak\subseteq Q\bs G$, then $\Theta_f$ is supported on $Y_{\Dfrak}$. In order to exclude the possibility that $\sigma(Y_{\Dfrak})\to 0$ while $|\Dfrak_k|\to\infty$, it is enough to show that the theta functions corresponding to the indicator functions of $\Dfrak_k$ do not become to large. To do this we bound the growth of $\norm{\Theta_{\id_{\Dfrak}}}$ in terms of $|\frakD|$, where $\norm{\Theta_f}^2=\int_{\Gamma\bs G}|\Theta_f(g)|^2d\sigma(g)$. In fact, we prove the following general bound for the $L^2$ norm of these theta functions that is of independent interest.
\begin{thm}\label{t:thetabound}
Let $G=\SL_2(\bbR)^{r_1}\times \SL_2(\bbC)^{r_2}$ and $\Gamma\subseteq G$ an arithmetic irreducible lattice. Then 
for all positive $f\in C^\infty_c(Q\bs G)$
\begin{equation}\label{e:thetabound}\norm{\Theta_f}^2\lesssim_\Gamma\norm{f}_2^2+\norm{f}_1^2,\end{equation}
where the norms on the right are with respect to Haar measure on $Q\bs G$.
\end{thm}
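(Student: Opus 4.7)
The plan is to adapt the Rogers--Schmidt bound on the $L^2$-norm of the Siegel theta transform to this arithmetic setting. Since both sides of \eqref{e:thetabound} are stable under commensurability of $\Gamma$ up to bounded constants, we may assume $\Gamma=\SL_2(\calO_F)$ for a number field $F$ of signature $(r_1,r_2)$, as every non-uniform irreducible arithmetic lattice in $G$ arises this way. Writing $V=(\bbR^2)^{r_1}\times(\bbC^2)^{r_2}$ (so $V\cong(F\otimes_\bbQ\bbR)^2$), the orbit map $g\mapsto(0,1)g$ identifies $N\backslash G$ with $V\setminus\{0\}$, and descends to $Q\backslash G\cong(V\setminus\{0\})/U$, where $U\subset(F\otimes_\bbQ\bbR)^\times$ is the norm-$1$ subgroup acting by scalars and into which $\calO_F^\times$ embeds as a cocompact discrete subgroup (Dirichlet's unit theorem). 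The map $\gamma\mapsto(0,1)\gamma$ then puts $\Gamma_\infty\backslash\Gamma$ in bijection with the set of primitive vectors $v\in\calO_F^2$ modulo $\calO_F^\times$, so that $\Theta_f(g)=\sum_{v\in\Sigma}f(vg)$ for this set $\Sigma$ of representatives.

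Expanding the square and unfolding one copy yields
\[
\|\Theta_f\|_2^2=\sum_{\gamma\in\Gamma_\infty\backslash\Gamma}\int_{\Gamma_\infty\backslash G}f(g)\overline{f(\gamma g)}\,dg,
\]
which we reinterpret as a sum over $\Gamma$-orbits on pairs of primitive vectors (modulo the $\calO_F^\times\times\calO_F^\times$-action). The orbits split into \emph{diagonal} ones (where $v_1=uv_2$ for some $u\in\calO_F^\times$) and \emph{off-diagonal} ones (where $v_1,v_2$ are $F$-linearly independent); the latter are indexed by the invariant $\det(v_1,v_2)\in\calO_F\setminus\{0\}$ modulo $\calO_F^\times$, with only finitely many orbits per $\det$-value by reduction theory. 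Using the $Q$-invariance of $f$, the diagonal contribution equals $\vol(\Gamma_\infty\backslash Q)\cdot\|f\|_2^2$, finite since $\Gamma_\infty\backslash Q$ is compact by construction of $Q$.

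The main work lies in the off-diagonal contribution. Any two $F$-linearly independent vectors have trivial joint $\SL_2$-stabilizer, so unfolding each orbit fully gives $\int_G f(v_0g)\overline{f(w_0g)}\,dg$ for an orbit representative $(v_0,w_0)$. The map $g\mapsto(v_0g,w_0g)$ identifies $G$ diffeomorphically with the determinant fiber $\{(v,w)\in V\times V:\det(v,w)=d\}$ (with $\det$ taken componentwise in each $\SL_2$-factor), pushing Haar measure forward to the natural fiber measure. Summing over all off-diagonal orbits then rewrites the total contribution as
\[
\int_{V\times V}f(v)\overline{f(w)}\,\Big(\sum_{d\in\Delta}\delta(\det(v,w)-d)\Big)\,dv\,dw,
\]
where $\Delta\subset F\otimes_\bbQ\bbR$ is the discrete set of permissible orbit determinants.

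The central technical obstacle is bounding this integral by $C_\Gamma\|f\|_1^2$. Since $\Delta$ is contained in (a finite union of scalar multiples of) the cocompact lattice $\calO_F\subset F\otimes_\bbQ\bbR$, Poisson summation replaces $\sum_{d\in\Delta}\delta(\cdot-d)$ by a constant density on $F\otimes_\bbQ\bbR$ plus an oscillatory remainder; the main term against the smooth integrand $f(v)\overline{f(w)}$ yields the desired bound $\lesssim_\Gamma\|f\|_1^2$, while the oscillatory remainder gives only lower-order terms absorbed via the smoothness of $f$. Adding the diagonal and off-diagonal estimates then produces \eqref{e:thetabound}.
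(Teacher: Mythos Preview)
Your approach is genuinely different from the paper's and, as written, has a real gap in the off-diagonal analysis.

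The paper's proof is spectral: it expands $\Theta_f$ as an integral of Eisenstein series, computes the constant terms for all $K$-types via raising/lowering operators (Proposition~\ref{p:const}), and shifts the contour to $\Re(s)=\tfrac12$. This yields the explicit formula of Proposition~\ref{p:ThetaMain}, expressing $\|\Theta_f\|^2$ as $c_0\|f\|_2^2+c_0^2\|f\|_1^2$ plus contributions $c_0c_jM_f(s_j)$ from exceptional poles $s_j\in(\tfrac12,1)$ of $\calC_\Gamma$. For $\Gamma=\SL_2(\calO_K)$ there are no exceptional poles, so Corollary~\ref{c:congruence} gives the bound directly; Lemma~\ref{l:SG} then passes to finite-index subgroups, and Proposition~\ref{p:Mfbnd} handles the exceptional-pole terms for arbitrary arithmetic $\Gamma$ by comparing back to $\SL_2(\calO_K)$.

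Your commensurability reduction and the diagonal computation are fine. The gap is in the Poisson step for the off-diagonal term. You lift $f\in C_c^\infty(Q\bs G)$ to $\tilde f$ on $V$, but $Q\bs G\cong(V\setminus\{0\})/U$ with $U$ the norm-one subgroup of $(F\otimes\bbR)^\times$, and when $r_1+r_2\geq 2$ the group $U$ is \emph{non-compact} (it has dimension $r_1+r_2-1$). Hence $\tilde f$ is $U$-invariant and not in $L^1(V)$: the integral $\int_V\tilde f(v)\,dv$ diverges. Consequently the ``main term'' you extract from Poisson summation, which should be a constant times $\bigl(\int_V\tilde f\bigr)^2$, is infinite rather than $\lesssim\|f\|_1^2$. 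Relatedly, the determinant invariant of an off-diagonal orbit lies in $(\calO_F\setminus\{0\})/\calO_F^\times$, not in a lattice inside $F\otimes\bbR$, and the number of orbits per determinant class is essentially $|(\calO_F/d)^\times|$, which grows with $|N(d)|$; so the sum you write as $\sum_{d\in\Delta}\delta(\det(v,w)-d)$ is neither a lattice sum nor one with bounded weights, and Poisson summation does not apply in the form you describe.

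In the rank-one cases $G=\SL_2(\bbR)$ or $\SL_2(\bbC)$ the unit group is finite, $U$ is compact, and a Rogers-type argument can be made to work; but the authors remark explicitly (just after the statement of Theorem~\ref{t:thetabound}) that Rogers' method ``does not seem to generalize to other cases; in particular it does not generalize to the case in hand.'' Your proposal does not explain how to overcome this non-compactness, and without that the off-diagonal bound is unproved.
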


It is worth mentioning that this is the same strategy used in~\cite{AM}. There, they get an estimate, similar to the one in Theorem~\ref{t:thetabound}, using a result of C.~A.~Rogers~\cite{Rogers56}, based on so called spherical symmetrization of the theta function above. However, the result of Rogers is quite specific to the case that is handled  in~\cite{AM} and does not seem to generalize to other cases; in particular it does not generalize to the case in hand.

Our proof of Theorem \ref{t:thetabound} relies on a formula for $\norm{\Theta_f}^2$ in terms of the poles of corresponding Eisenstein series (see Proposition  \ref{p:ThetaMain}), together with a comparison of the norms of theta functions constructed with respect to different lattices. In particular, for $\Gamma=\SL_2(\calO_K)$ the Eisenstein series has no exceptional poles and the bound \eqref{e:thetabound} easily follows from this formula. Next,
we show that if the bound \eqref{e:thetabound} holds for $\Gamma$, then it holds for any finite index subgroup (see Lemma \ref{l:SG}). Since any arithmetic lattice is commensurable to $\SL_2(\calO_K)$ we can prove Theorem \ref{t:thetabound} for all arithmetic lattices.

We note that the assumption that $\Gamma$ is arithmetic is probably not needed. When $r_1+r_2\geq 2$ any irreducible lattice is arithmetic so this is only an issue when $G=\SL_2(\bbR)$ or $G=\SL_2(\bbC)$. In those cases, even without the arithmeticity assumption, we prove \eqref{e:thetabound} for a specific family of positive functions, approximating the indicator functions of $\frakD_k$.
Specifically, for $G=\SL_2(\bbR)$ or $\SL_2(\bbC)$ we consider the family of functions $f^{(\lambda)}\in C^\infty_c(Q\bs G)$ for $\lambda\in [1,\infty)$ as follows (see section \ref{s:coordinates} for the coordinate used for this definition).
\begin{itemize}
\item When $G=\SL_2(\bbR)$ we let
$$f^{(\lambda)}(n_xa_{t}k_{\theta})=v_\lambda(t)\psi(\lambda \theta),$$
where $v_\lambda(t)$ approximates the indicator function of $[-(1+\epsilon)\log(\lambda),0]$ and $\psi\in C^\infty_c(\bbR)$ is a fixed smooth compactly supported function.
\item When $G=\SL_2(\bbC)$ we let
$$f^{(\lambda)}(n_xa_{t}k_{\theta,\alpha,\beta})=v_\lambda(2t)\psi(\lambda \sin(\theta),\lambda(\alpha-\beta)),$$
where $v_\lambda(t)$ approximates the indicator function of $[-(3+\epsilon)\log(\lambda),0]$ and $\psi\in C^\infty_c(\bbR^2)$ is a fixed smooth compactly supported function. 
\end{itemize}

\begin{thm}\label{t:ThetaR}
For any lattice $\Gamma\subseteq G$ with $G=\SL_2(\bbR)$ or $\SL_2(\bbC)$ and $f^{(\lambda)}$ as above,
\[\norm{\Theta_{f^{(\lambda)}}}^2\lesssim_{\Gamma,\psi,\epsilon} \norm{f^{(\lambda)}}_2^2+\norm{f^{(\lambda)}}_1^2,\]
uniformly for $\lambda\in [1,\infty)$.
\end{thm}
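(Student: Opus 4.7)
The plan is to go through the same spectral machinery that underpins Theorem~\ref{t:thetabound} and to bound the residual contributions by hand using the specific structure of $f^{(\lambda)}$. Applying Proposition~\ref{p:ThetaMain} to $f=f^{(\lambda)}$ gives a decomposition of the form
\[
\norm{\Theta_{f^{(\lambda)}}}^2 \;=\; \text{(main term)} \;+\; \sum_{j=1}^{N} c_j\,\bigl|\mathcal{M}_{s_j}\bigl(f^{(\lambda)}\bigr)\bigr|^2,
\]
where the main term comes from the continuous spectrum (dominated by $\|f^{(\lambda)}\|_2^2$ via Plancherel) together with the constant residue at the trivial pole $s=1$ (which contributes exactly $\|f^{(\lambda)}\|_1^2$), and $s_1,\ldots,s_N\in(1/2,1)$ are the (finitely many) exceptional poles of the spherical Eisenstein series for $\Gamma$. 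For arithmetic $\Gamma$ the exceptional set is empty, which is how Theorem~\ref{t:thetabound} is obtained; my task is therefore to bound each $|\mathcal{M}_{s_j}(f^{(\lambda)})|^2$ by $\|f^{(\lambda)}\|_1^2+\|f^{(\lambda)}\|_2^2$ uniformly in $\lambda\in[1,\infty)$.

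The key input is the product structure of $f^{(\lambda)}$ together with the $K$-invariance of $E(\cdot,s)$. Because the residue at any $s_j$ is spherical, its restriction to the cusp neighborhood reduces to a fixed scalar function of the $A$-variable $t$ alone, of size $e^{2s_j t}$. Consequently $\mathcal{M}_{s_j}(f^{(\lambda)})$ factors as the product of a radial integral of $v_\lambda$ against $e^{2s_j t}$ and an angular integral of the rescaled test function $\psi(\lambda\,\cdot\,)$. The angular integral is of order $\lambda^{-d}$, with $d=1$ for $\SL_2(\bbR)$ and $d=2$ for $\SL_2(\bbC)$, since $\psi$ is compactly supported and its argument has been rescaled by $\lambda$. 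A direct computation using the explicit form of $v_\lambda$ (an approximation to the indicator function of an interval of length $\asymp\log\lambda$) shows that the radial integral grows like $\lambda^{\alpha(s_j)}$ with $\alpha$ a continuous, strictly increasing function of $s_j$ on $(1/2,1]$. The coefficients $1+\epsilon$ in the $\SL_2(\bbR)$ case and $3+\epsilon$ in the $\SL_2(\bbC)$ case are chosen precisely so that at $s=1$ the combined radial-angular estimate reproduces $\|f^{(\lambda)}\|_1$; hence for every exceptional $s_j<1$ one obtains a strict polynomial saving
\[
\bigl|\mathcal{M}_{s_j}\bigl(f^{(\lambda)}\bigr)\bigr|^2 \;\lesssim\; \lambda^{-\delta_j}\,\|f^{(\lambda)}\|_1^2,\qquad \delta_j>0,
\]
and the finite sum over $j$ is absorbed into the right-hand side of the claimed bound.

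The main obstacle I anticipate is the precise bookkeeping of normalizations needed to identify $\mathcal{M}_{s_j}$ coming out of Proposition~\ref{p:ThetaMain} in both the real and complex cases: the weight $e^{-2\rho t}$ on Haar measure of $Q\backslash G$, the substitution $v_\lambda(2t)$ appearing in the $\SL_2(\bbC)$ definition, and the exact form of the transverse angular variables $(\theta,\alpha,\beta)$ all need to be tracked carefully to verify that $\alpha$ is indeed monotone and that $\alpha(1)$ matches the exponent of $\|f^{(\lambda)}\|_1$ exactly. Once this check is done the argument is essentially automatic: every exceptional pole contributes a factor strictly smaller than $\|f^{(\lambda)}\|_1^2$, and the finitely many such contributions are absorbed into the final bound, giving the claimed estimate uniformly for $\lambda\in[1,\infty)$.
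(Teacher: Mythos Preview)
Your proposal contains a genuine gap at the step where you claim that ``the residue at any $s_j$ is spherical'' and hence that $\mathcal{M}_{s_j}(f^{(\lambda)})$ factors as a single radial integral times the angular average $\int\psi(\lambda\,\cdot\,)$. This is not what Proposition~\ref{p:ThetaMain} says. The quantity that appears there is
\[
M_f(s_j)\;=\;\sum_{m,l}P_m(s_j)\left|\int_{\bbR}\hat f_{m,l}(a_{\eta t})e^{-s_jt}\,dt\right|^2,
\]
a sum over \emph{all} $K$-types, not just $m=0$. The non-spherical Eisenstein series $E(\phi_m,s,\cdot)$ also have poles at $s_j$ (their constant term carries the factor $P_m(s)\calC(s)$, and $P_m(s_j)\neq 0$ for $s_j\in(\tfrac12,1)$), so the residual representation at $s_j$ has a full ladder of $K$-types and $\Theta_{f^{(\lambda)}}$ projects nontrivially onto all of them. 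Since $\psi(\lambda\theta)$ is concentrated near $\theta=0$, its Fourier coefficients $\hat\psi_\lambda(2m)$ are significant for all $|m|\lesssim\lambda$, and these terms must be summed.

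This is not a bookkeeping issue: the non-spherical terms actually erase the polynomial saving you predict. In the $\SL_2(\bbR)$ case one has $P_m(s)\asymp_s (m+1)^{1-2s}$, and a direct computation gives
\[
M_{f^{(\lambda)}}(s)\;\asymp_s\;\lambda^{2s(1+\epsilon)-2}\sum_{m\geq 0}(m+1)^{1-2s}\bigl|\hat\psi(\tfrac{2m}{\lambda})\bigr|^2\;\asymp_s\;\lambda^{2s(1+\epsilon)-2}\cdot\lambda^{2-2s}\;\asymp_s\;\lambda^{2\epsilon}\;\asymp\;\|f^{(\lambda)}\|_1^2,
\]
so the bound $M_{f^{(\lambda)}}(s)\lesssim_s\|f^{(\lambda)}\|_1^2$ is \emph{sharp}, with no $\lambda^{-\delta_j}$ gain. (Had only the $m=0$ term been present, your argument would indeed give a saving.) For $\SL_2(\bbR)$ the paper carries out exactly this sum over $m$. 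For $\SL_2(\bbC)$ even this direct estimate is not available, and the paper instead proves that $M_{f^{(\lambda)}}(s)$ is essentially monotone in $s$ and then compares with an auxiliary arithmetic lattice having a pole arbitrarily close to $1$, for which Proposition~\ref{p:Mfbnd} applies. Your outline would need to be revised along one of these two lines.
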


\begin{rem}
It is an interesting question for what groups $G$ and lattices $\Gamma$ can one show the bound \eqref{e:thetabound} for all theta functions.
We note that a formula for the norms of theta functions in terms of poles of Eisenstein series can be obtained in general for arithmetic lattices in semi-simple algebraic groups (see \cite{Harishchandra68}). Moreover, the fact that the bound \eqref{e:thetabound} for a lattice $\Gamma$ implies the same bound (with perhaps a different constant) for any finite index subgroup, can also be proved in this generality. The main ingredient that is missing in order to prove Theorem \ref{t:thetabound}, and hence also Theorem \ref{t:logarithmlaw}, for (arithmetic) hyperbolic manifolds in higher dimensions, is the existence of a nice family of lattices for which the Eisenstein series is known not to have exceptional poles.
\end{rem}

%
%

\subsection*{Acknowledgments}
The second named author would like to thank G. Margulis and A. Salehi Golsefidy for many
useful and illuminating conversations. The first named author would like to thank S. Yu for pointing out a mistake in a previous version of this paper. The first named author was partially supported by NSF grant DMS-1001640.

\section{Notation and preliminaries}\label{sec;notation}

\subsection{Coordinates}\label{s:coordinates}
Let $G=\prod_{j=1}^n G_j$ with $G_j=\SL_2(\bbR)$ for $j\leq r_1$ and $G_j=\SL_2(\bbC)$ for $j>r_1$.
We fix coordinates on $G$ and use them to fix a normalization of the Haar measure.

On each $G_j$ we have a decomposition $G_j=N_jA_jK_j$ with
$N_j$ upper triangular, $A_j$ diagonal with real coefficients and $K_j$ maximal compact (so $K_j=\SO(2)$ for $j\leq r_1$ and $K_j=\SU(2)$ for $j>r_1$).
We denote by $N=\prod_j N_j,\;A=\prod_j A_j$, and $K=\prod_j K_j$. Define the group $M\subseteq G$ to be the centralizer of $A$ in $K$, so that $M=\prod_j M_j$ with $M_j=\{\pm 1\}$ for $j\leq r_1$ and $M_j$ is the group of diagonal unitary matrices for $j>r_1$.

For ${\bf{t}}\in \bbR^n$ and $\mathbf{x}\in \bbR^{r_1}\times\bbC^{r_2}$ we denote by $a_{\bf{t}}=(a_{t_1},\ldots,a_{t_n})\in A$ and $n_{\bf{x}}=(n_{x_1},\ldots,n_{x_n})\in N$, where $a_{t}=\begin{pmatrix}e^{t/2} &0\\  0 &e^{-t/2}\end{pmatrix}\in A_j$ and $n_{x}=\begin{pmatrix}1 & x \\ 0 &1 \end{pmatrix}\in N_j$.
Let $\mu=(\mu_1,\ldots,\mu_n)$ with $\mu_{j}=1$ for $j\leq r_1$ and $\mu_j=2$ for $j>r_1$.
We fix once and for all a normalization for the Haar measure of $G$ such that in the coordinates $g=n_{\bf{x}}a_{\bf{t}}k$ we have
\[dg=\exp(-\sum_j\mu_j t_j)d{\bf{t}}d{\bf{x}}dk.\]
where $dk$ is normalized to be a probability measure on $K$.
Namely, for $j\leq r_1$ and $k_\theta=\smatrix{\cos(\theta)}{\sin(\theta)}{-\sin(\theta)}{\cos(\theta)}\in \SO(2)$ we have $dk_\theta=\frac{d\theta}{2\pi}$ and for $j>r_1$ and $k_{\theta,\alpha,\beta}=\smatrix{\cos(\theta)e^{i\alpha}}{\sin(\theta)e^{i\beta}}{-\sin(\theta)e^{-i\beta}}{\cos(\theta)e^{-i\alpha}}$,
we have $dk_{\theta,\alpha,\beta}=\tfrac{1}{16\pi^2}|\sin(2\theta)|d\theta d\alpha d\beta$.

\subsection{Coordinates at the cusp}
Let $G$ be as above and let $\Gamma\subseteq G$ denote an irreducible lattice.
We say that $\Gamma$ has a cusp at infinity if $\Gamma_\infty=\Gamma\cap P$ is non trivial where $P\subseteq G$ denotes the group of upper triangular matrices.

From the work of Shimizu \cite{Shimizu63} we have that a typical element of $\Gamma_\infty$ is of the form
\[\begin{pmatrix} u & \alpha\\0 & u\end{pmatrix}=\left(\begin{pmatrix} u_1 & \alpha_1\\0 & u_1^{-1}\end{pmatrix},\ldots,\begin{pmatrix} u_n & \alpha_n\\0 & u_n^{-1}\end{pmatrix}\right),\]
with $\prod_j|u_j|^{\mu_j}=1$. In fact, if $n\geq 2$ then $\Gamma$ is arithmetic and hence commensurable to $\SL_2(\calO_K)$ with $K$ a number field with $r_1$ real places and $r_2=n-r_1$ (pairs of) complex places and $\calO_K$ is the ring of algebraic integers. In particular, in that case (perhaps after conjugating $\Gamma$) we may assume that $\alpha\in \calO_K$ is an algebraic integer and $u\in \calO_K^*$ is in the group of units.

We introduce new coordinates that are more suitable for working with the cusp at infinity.
Consider the lattice $\calO_\Gamma\subseteq \bbR^{r_1}\times\bbC^{r_2}$ defined by
$$\calO_\Gamma=\{\xbf\in \bbR^{r_1}\times\bbC^{r_2}|\; n_\xbf\in \Gamma_\infty\}.$$
and let $U_{\Gamma}\subseteq \bbR^n$ denote the image of the homomorphism $\iota:\Gamma_\infty\to \bbR^n$ given by
\[\iota(\begin{pmatrix} u & \alpha\\ 0 & u^{-1}\end{pmatrix})=(\log(|u_1|),\ldots,\log(|u_n|)).\]
Then $\calO_\Gamma\subseteq \bbR^{r_1}\times\bbC^{r_2}$ is a lattice (of real rank $r_1+2r_2$) and
$U_{\Gamma}\subseteq \{x\in \bbR^n| \sum_j\mu_j x_j=0\}$ is a lattice of rank $r_1+r_2-1=n-1$.

Fix an integral basis $v_1,\ldots,v_{n-1}$ of $U_\Gamma$ and complete it to a basis of $\bbR^n$ by adding the vector $\eta=\frac{1}{n}(\mu_1^{-1},\ldots,\mu_n^{-1})$.
\begin{defn}\label{d:regulator}
We define the regulator $R_\Gamma$ of $\Gamma$ as the determinant of the matrix
\begin{equation}\label{e:Matrix}
D=\begin{pmatrix}
v_{1,1} & \cdots & v_{n-1,1} & \eta_1\\
\vdots & \ddots & \vdots & \vdots\\
v_{1,n} & \cdots & v_{n-1,n} & \eta_n
\end{pmatrix}.
\end{equation}
This does not depend on our choice of basis. When $\Gamma=\PSL_2(\calO_K)$ then $R_{\Gamma}=2^{-r_2}R_{\calO_K}$ with $R_{\calO_K}$ the regulator of $\calO_K$.
\end{defn}
We define our coordinates at the cusp as $g({\bf{x}},{\bf{t}},k)=n_{\bf{x}}a_{D\bf{t}}k$.
Note that if $\tilde{\bf{t}}=D\bf{t}$ then $\sum_j\mu_j \tilde{t}_j=t_n$, consequently, in these coordinates the Haar measure is
$dg=R_\Gamma e^{-t_n}d\textbf{t}d\textbf{x}dk.$

\subsection{Cusp neighborhood}\label{s:cuspneighborhood}
Using the coordinates at the cusp is easy to see that the set
\begin{equation}\label{e:Cusp}
\calF_{\infty}=\{n_{\bf{x}}a_{D\bf{t}}k|{\bf{x}}\in \calF_{\calO_{\Gamma}},\; {\bf{t}}\in [-1,1]^{n-1}\times\bbR,\;k\in K\}\end{equation}
with $\calF_{\calO_{\Gamma}}\subseteq \bbR^{r_1}\times\bbC^{r_2}$ denoting a fundamental domain for $\calO_{\Gamma}$, is a fundamental domain for $\Gamma_\infty\bs G$ (to be precise, when $r_1=0$ it is possible that $\Gamma_\infty\cap K\neq \{1\}$ in which case we need to replace $K$ by a fundamental domain for $(\Gamma_\infty\cap K)\bs K$).

For $\tau\in \bbR$ we define the cusp neighborhoods
\begin{equation}\label{e:CuspT}\calF_{\infty}(\tau)=\{n_{\bf{x}}a_{D\bf{t}}k\in\calF_\infty|t_n\geq \tau\}.\end{equation}
We now give a coordinate free description of these sets. For this, let $A^1\subseteq A$ denote the one parameter group generated by $a_\eta$ with $\eta=\tfrac{1}{n}(\mu_1^{-1},\ldots,\mu_n^{-1})$ as above,
and let $Q\subseteq P$ denote the subgroup given by
\begin{equation}\label{e:Q}
Q=\left\{\begin{pmatrix} a & x \\ 0 & a^{-1}\end{pmatrix}\in G| a,x\in \bbR^{r_1}\times \bbC^{r_2},\; \prod_{j=1}^n |a_j|^{\mu_j}=1\right\}.
\end{equation}
Note that $\Gamma_\infty\subseteq Q$ and the quotient $\Gamma_\infty\bs Q$ is relatively compact. In fact, using the coordinates at the cusp we see that the (relatively compact) set
\begin{equation}\label{e:FQ}\omega_{\Gamma}=\{n_{\bf{x}}a_{D\bf{t}}m\in Q| {\bf{x}}\in \calF_{\calO_{\Gamma}},\;m\in M,\; {\bf{t}}\in [-1,1]^{n-1}\times\{0\}\}\end{equation}
is a fundamental domain for $\Gamma_\infty\bs Q$.
Using this notation we can write the cusp neighborhood as the Siegel set $\calF_\infty(\tau)=\omega_{\Gamma} A^1(\tau) K$ where
$A^1(\tau)=\{a_{\eta t}\in A^1|t\geq \tau\}$.

\subsection{Cusp decomposition}
Let $\Xi=\{\xi_1,\ldots,\xi_h\}$ denote a complete set of inequivalent cusps of $\Gamma$ (that is, $\xi_j^{-1}P\xi_j\cap\Gamma$ is not trivial and $\xi_i\xi_j^{-1}\not\in \Gamma$) and let $\calF_j(\tau)=\xi_j\calF_\infty(\tau)$ denote the corresponding cusp neighborhoods. From Shimizu's Lemma \cite[Lemma 5]{Shimizu63} we have that for $\tau$ sufficiently large $\gamma \calF_i\cap\calF_j=\emptyset$ for any $\gamma\in \Gamma$ and any $1\leq i,j\leq h$.
Moreover, for such $\tau$ we have a Siegel fundamental domain of the form
\[\calF_{\Gamma}=\mathfrak{C}\cup \calF_1(\tau)\cup\cdots\cup \calF_h(\tau),\]
with $\mathfrak{C}$ relatively compact, satisfying that $\Gamma \calF_{\Gamma}=G$, and that the set $\{\gamma\in \Gamma| \gamma \calF_{\Gamma}\cap \calF_{\Gamma}\neq \emptyset\}$ is finite. Indeed, for $n\geq 1$ we have that $\Gamma$ is arithmetic and this follows from the reduction theory of Borel and Harish-Chandra~\cite{BHC}. When $n=1$, $\Gamma$ might not be arithmetic, however, in this case $G=\SL_2(\bbR)$ or $\SL_2(\bbC)$ and this is well known (see e.g., \cite[Proposition 3.9]{ElstrodtGrunewaldMennicke98}).

Let $\d$ denote a distance function on $X$ coming from a left $G$-invariant bi $K$-invariant Riemannian metric on $G$. Specifically, denote by
$\d_G$ the distance function on $G$ induced from the Riemannian metric and let $\d=\d_{X}$ denote the distance function on $X=\homog$ given by
$$\d(\Gamma g,\Gamma h)=\inf_{\gamma\in\Gamma}\d_G(g,\gamma h).$$
Clearly, $\d(x,y)\leq\d_G(g,h)$ for any choice of representatives $x=\Gamma g, y=\Gamma h$; the converse, where the representatives $g,h$ are taken from a Siegel set is known as Siegel's conjecture. It's proof is due to Ding \cite{Ding94} for the case of $G=\SL_n(\bbr)$ and to Leuzinger \cite{Leuzinger04} and Ji \cite{Ji98}, independently, for a general group and an arithmetic lattice. The case at hand however is simpler, as $\Gamma$ is either arithmetic of $\bbq$-rank one, or that $G$ is of real rank one; in these cases there is no gap in the original proof of Borel~\cite[Theorem C]{B}.
Applying these results to our setting we get the following
\begin{lem}\label{l:borel-2}
For $o\in \calF_\Gamma$ and $\tau_0\in \bbR$ fixed, there exists a constant $C$, such that for any $g\in \calF_j(\tau),\; j=1,\ldots, h$ with $\tau>\tau_0$ and any $\gamma\in\Gamma$,
\begin{equation}\label{e;metric-fund} \d_G(o,\gamma  g)\geq \d_G(o,g)-C\end{equation}
\end{lem}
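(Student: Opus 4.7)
The lemma is a version of Siegel's conjecture for our setup, and its proof (in the $\bbQ$-rank one arithmetic case or the real rank one case) is due to Borel~\cite[Theorem~C]{B}. The plan is as follows.

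First I would reformulate using left $G$-invariance of $\d_G$: since $\d_G(o, \gamma g) = \d_G(\gamma^{-1}o, g)$, it is enough to prove that for every $\gamma \in \Gamma$,
\[\d_G(\gamma^{-1}o, g) \geq \d_G(o, g) - C.\]
The goal then becomes showing that among all $\Gamma$-translates of $o$, the one closest to $g$ is, up to an additive error, the original point $o$ itself.

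Next I would record the key coordinate estimate. Writing $g = \xi_j n_\xbf a_{D\tbf} k$ with $t_n = t \geq \tau$ and the remaining coordinates constrained so that the $Q$-component lies in the fixed relatively compact set $\omega_\Gamma$, one checks
\[\d_G(o, g) = t + O_{o,\tau_0}(1),\]
the implicit constant depending only on $o$ and $\tau_0$. The reason is that the rays $s \mapsto \xi_j a_{\eta s}$ are unit-speed geodesics deep into the cusp and that the remaining bounded parameters contribute only an $O(1)$ error in the left/bi-$K$-invariant Riemannian metric on $G$.

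Then I would split into cases according to where the translate $p := \gamma^{-1} o$ lies, picking $\gamma_1 \in \Gamma$ with $\gamma_1 p \in \calF_\Gamma$. If $\gamma_1 p$ sits in the compact core $\mathfrak{C}$, in a cusp $\calF_i(\tau_0)$ with $i \neq j$, or at bounded $A$-height in $\calF_j(\tau_0)$, then any path from $p$ to $g$ must traverse the cusp at $\xi_j$ from bounded height up to height $t$, and the same distance estimate gives $\d_G(p, g) \geq t - O(1) \geq \d_G(o, g) - C$. The remaining and most delicate case is when $\gamma_1 p \in \calF_j(\tau')$ with $\tau'$ large; here one invokes Shimizu's lemma~\cite[Lemma~5]{Shimizu63} together with the fact that $\Gamma_\infty$ acts cocompactly on $\omega_\Gamma$ to conclude that the ``transverse'' separation between $p$ and $g$ within the cusp must be bounded below, still yielding $\d_G(p, g) \geq t - O(1)$.

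The main obstacle is this last case: ruling out that distinct $\Gamma$-translates of $o$ can approach $g$ arbitrarily closely within the same cusp. This is precisely the content of Borel's argument, which uses Shimizu-type lower bounds on the norms of the cusp-stabilizing elements to separate different $\Gamma$-orbits in the cross-section $\Gamma_\infty \bs \omega_\Gamma$. Once all three cases are combined, one obtains the uniform additive constant $C = C(o, \tau_0)$ as claimed.
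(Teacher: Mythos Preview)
Your proposal and the paper take the same approach: both attribute the lemma to Borel's Theorem~C~\cite{B} (valid in the $\bbQ$-rank one arithmetic and real rank one cases relevant here), and the paper in fact provides no proof beyond this citation. Your sketch of the underlying case analysis is a reasonable gloss on Borel's argument and goes somewhat beyond what the paper itself supplies.
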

In particular, this implies that for any $g\in \calF_j(\tau)$ with $\tau>\tau_0$ we have
$$\d_G(o,g)-C\leq \d(o, \Gamma g)\leq \d_G(o,g).$$
Moreover, any $g\in \calF_j(\tau)$ can be written as $g=\xi_j q a_{\eta t} k$ with $q\in \omega, k\in K$ and $t\geq \tau$. Since $\omega$ is relatively compact we have that
$d_G(o,g)=d_G(o,a_{\eta t})+O(1)$. Consequently, we have that any $x=\Gamma g\in \Gamma\bs G$ with $g=\xi_j q a_{\eta t} k\in \calF_j(\tau)$ satisfies
\begin{equation}\label{e:cuspdist}\d(o,x)=\d_G(o,a_{\eta t})+O(1).\end{equation}
\begin{rem}
We note that Theorem \ref{t:logarithmlaw} (with the same proof) holds for a more general distance like function $\Delta$ (instead of the standard $\Delta(x)=\d(o,x)$), as long as it can be evaluated on Siegel sets in the sense of \eqref{e:cuspdist} and behave nicely under the right action of $K$ in the sense that $\Delta(xk)=\Delta(x)+O(1)$ uniformly for $k\in K$.
\end{rem}

%
%

\subsection{Normalization}
We normalize the Haar measure $\sigma=\sigma_\Gamma$ to be a probability measure on $\Gamma\bs G$.
That is, we set $d\sigma(g)=\frac{dg}{v_\Gamma}$ with $v_\Gamma=\int_{\Gamma\bs G}dg$.

We also fix compatible normalization of the Haar measures on $Q$ and on $Q\bs G$. First, we identify $Q\bs G= M\bs A^1K$ and we normalize the Haar measure on $Q\bs G$ so that for any $f\in C_c(Q\bs G)$ lifted to a $Q$-invariant function on $G$ we have
\begin{equation}\label{e:QHaar}\int_{Q\bs G}f(g)dg=\int_{\bbR}\int_{M\bs K}f(a_{\eta t}k)e^{-t}dtdk.\end{equation}
We normalize the Haar measure on $Q$ so that for any compactly supported function $f$ on $G$ we have
\[\int_G f(g)dg=\int_{Q\bs G}\int_Q f(qg)dqdg.\]
Specifically, in the coordinates, $q=n_{\xbf}a_{D\tbf}m$ with $\tbf\in \bbR^{n-1}\times \{0\}$ we have $dq=R_{\Gamma}d\tbf d\xbf dm$.
For future reference we note that, with this normalization, the fundamental domain \eqref{e:FQ} has measure
\begin{equation}\label{e:volQ}
|\omega_{\Gamma}|=\int_{\omega_{\Gamma}}dq=2^{n-1}R_{\Gamma}|\calF_{\calO_{\Gamma}}|.
\end{equation}

Finally, we fix a normalization for the metric $\d_G$.
For any $r>0$ let
\begin{equation}\label{e:cuspball}
B_r=\{x\in \Gamma\bs G|\d(o,x)\geq r\}.
\end{equation}
We recall that by \cite[Section 5]{KM2} for any metric, $\d$, on $\Gamma\bs G$ arising as above from a metric, $\d_G$, there is a constant $k>0$ such that $\sigma(B_{r})\asymp e^{-kr}$.
We can always re-normalize the metric $\d_G$ to make $k=1$, and this is precisely the normalization we fix. That is, with this normalization we have that
\begin{equation}\label{e:cuspvol}
\sigma(B_{r})\asymp e^{-r}.
\end{equation}
Note that if $\d(o,x)$ is sufficiently large, then $x$ has a unique representative $x=\Gamma g$ with $g$ in one of the cusp neighborhoods $\calF_j(\tau)$. Consequently, from \eqref{e:cuspdist} and \eqref{e:cuspvol} one sees that this normalization of $\d_G$ imply that
\begin{equation}\label{e:distcusp}
\d_G(o,a_{\eta t})=|t|+O(1).
\end{equation}

\section{Theta functions}
Let $G$ be as above and $\Gamma\subseteq G$ an irreducible lattice with a cusp at infinity.
To any smooth compactly supported function $f\in C_c(\Gamma\bs G)$ we attach the theta function
\begin{equation*}\label{e:thetadefn}
\Theta_f(g)=\sum_{\gamma\in \Gamma_\infty\bs \Gamma}f(\gamma g),
\end{equation*}
where the sum is over a full set of representatives for $\Gamma_\infty\bs\Gamma$.
We will sometimes consider theta functions of the same $f\in C^\infty_c(Q\bs G)$ constructed with respect to different lattices. We write $\Theta_f^{\Gamma}(g)$ when we want to emphasize the dependence on the lattice.

Note that (since $f$ is compactly supported) the infinite sum over $\Gamma_\infty\bs \Gamma$ is actually a finite sum. In particular it converges pointwise to a continuous function. Also, since $f$ is already invariant under $\Gamma_\infty$, the resulting function is invariant under $\Gamma$. We can thus think of $\Theta_f$ as a function on $\Gamma\bs G$ and we define its norm by $\norm{\Theta_f}^2=\int_{\Gamma\bs G}|\Theta_f(g)|^2d\sigma_\Gamma(g)$.

This section is devoted to the proof of Theorem \ref{t:thetabound} and Theorem \ref{t:ThetaR},
that is, to prove the bound $\norm{\Theta_f}^2\lesssim_{\Gamma} \norm{f}_1^2+\norm{f}_2^2$,
where the norms $\norm{f}_1^2$ and $\norm{f}_2^2$ are taken with respect to Haar measure on $Q\bs G$. As a first step, we use the (standard) unfolding trick to get the following useful identity
\begin{lem}\label{l:unfolding}
For $\Theta_f$ as above and any $F\in L^2(\Gamma\bs G)$
\begin{equation*}
\int_{\Gamma\bs G}\Theta_f(g)F(g)d\sigma(g)=\int_{\Gamma_\infty\bs G}f(g)F(g)d\sigma(g).
\end{equation*}
\end{lem}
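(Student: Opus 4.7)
The plan is to use the standard unfolding trick: substitute the definition of $\Theta_f$ into the left-hand side, interchange sum and integral (which is legitimate because $f$ is compactly supported and so the sum defining $\Theta_f(g)$ is finite for each $g$), use the $\Gamma$-invariance of $F$ to rewrite $F(g) = F(\gamma g)$ for each $\gamma \in \Gamma_\infty \bs \Gamma$, and then apply left $G$-invariance of Haar measure to translate by $\gamma^{-1}$.

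More concretely, I would write
\[
\int_{\Gamma\bs G}\Theta_f(g)F(g)\,d\sigma(g)
= \int_{\Gamma\bs G}\sum_{\gamma\in\Gamma_\infty\bs\Gamma} f(\gamma g)\,F(\gamma g)\,d\sigma(g)
= \sum_{\gamma\in\Gamma_\infty\bs\Gamma} \int_{\Gamma\bs G} f(\gamma g)\,F(\gamma g)\,d\sigma(g),
\]
where the first equality uses $F(\gamma g) = F(g)$ since $F \in L^2(\Gamma \bs G)$. After changing variables $g \mapsto \gamma^{-1} g$ in each term (permissible since the quotient measure comes from a left-invariant Haar measure on $G$), the summand becomes $\int_{\gamma^{-1}\Gamma \bs G} f(g) F(g)\, d\sigma(g)$, and combining the sum over a complete set of representatives for $\Gamma_\infty \bs \Gamma$ with integration over a fundamental domain for $\Gamma \bs G$ yields integration over a fundamental domain for $\Gamma_\infty \bs G$.

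The potential technical point to address is the interchange of sum and integral; but compact support of $f$ on $Q \bs G$, combined with the fact that $\Gamma_\infty$ is the stabilizer of the cusp at infinity and $Q \supseteq \Gamma_\infty$, ensures that only finitely many $\gamma \in \Gamma_\infty \bs \Gamma$ contribute for $g$ in any compact set, so the defining sum for $\Theta_f$ is locally finite and Fubini applies trivially. There is no real obstacle: this is the standard unfolding computation used for Eisenstein and incomplete theta series, and the only ingredients are the $\Gamma$-invariance of $F$, the left invariance of $d\sigma$, and the bijection between $(\Gamma_\infty \bs \Gamma) \times (\Gamma \bs G)$ and $\Gamma_\infty \bs G$ at the level of fundamental domains.
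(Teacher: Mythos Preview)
Your proposal is correct and follows exactly the same unfolding argument as the paper: expand the sum defining $\Theta_f$, interchange sum and integral (justified by local finiteness), use the $\Gamma$-invariance of $F$ together with the left-invariance of Haar measure to change variables $g\mapsto \gamma^{-1}g$, and observe that $\bigcup_{\gamma\in\Gamma_\infty\bs\Gamma}\gamma^{-1}\calF_\Gamma$ is a fundamental domain for $\Gamma_\infty\bs G$. One notational slip: after the change of variables you should write the domain as the translated fundamental domain $\gamma^{-1}\calF_\Gamma$ rather than ``$\gamma^{-1}\Gamma\bs G$'' (which equals $\Gamma\bs G$ since $\gamma\in\Gamma$), but your intent is clear and matches the paper.
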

\begin{proof}
Let $\calF_\Gamma\subseteq G$ denote a fundamental domain for $\Gamma\bs G$. Note that $\calF_\infty=\bigcup_{\gamma\in \Gamma_\infty\bs\Gamma} \gamma^{-1}\calF_\Gamma$ is a fundamental domain for $\Gamma_\infty\bs G$, hence
\begin{eqnarray*}\lefteqn{
\int_{\calF_{\Gamma}}\Theta_f(g) F(g)d\sigma(g)= \sum_{\gamma\in \Gamma_\infty\bs \Gamma}\int_{\calF_{\Gamma}}f(\gamma g)F(g)d\sigma_(g)}\\
&&= \sum_{\gamma\in \Gamma_\infty\bs \Gamma}\int_{\gamma^{-1}\calF_{\Gamma}}f(g)F(g)d\sigma(g)
= \int_{\calF_\infty}f(g)F(g)d\sigma(g).
\end{eqnarray*}
\end{proof}
In particular, taking $F=\overline{\Theta}_f=\Theta_{\bar{f}}$ we get
\begin{equation}\label{e:unfold2}
\norm{\Theta_f}^2=\int_{\Gamma_\infty\bs G}\overline{f(g)} \Theta_f(g)d\sigma(g).
\end{equation}
This identity (together with the fact that $f$ is compactly supported and $\Theta_f$ is continuous) shows that $\norm{\Theta_f}$ is finite so that indeed $\Theta_f\in L^2(\Gamma\bs G)$.

Next, in order to get an estimate on this norm, we express $\Theta_f$ itself as an integral over Eisenstein series;
but before we can do that, we need to recall some of the theory of Eisenstein series.
 For details we refer to \cite{Efrat87,Hejhal83,Sarnak83} in the spherical case, to \cite{Warner79} in the non-spherical real rank one case, and to \cite{Harishchandra68} in general.

\subsection{Spherical Eisenstein series}
For each of the factors $G_j=\SL_2(\bbR)$ or $\SL_2(\bbC)$ let $\Omega_j$ denote the Casimir operator, this is a second order differential operator acting on $C^\infty(G_j)$ commuting with the left action of $G_j$ (see e.g., \cite[Page 198]{Lang75} and \cite[Page 62]{JorgensonLang08} for explicit formulas for these operators on $SL_2(\bbR)$ and $\SL_2(\bbC)$ respectively).

For any $s\in \bbC$, define the function $\vphi_{s}\in C^\infty(Q\bs G/K)$ by
\begin{equation}\label{e:vphi}
\vphi_{s}(na_{\bf{t}}k)=\exp(s\sum_j\mu_jt_j).\end{equation}
This is a joint eigenfunction of the Casimir operators $\Omega_{G_j}$ with eigenvalues $\mu_j^2s(1-s)$ respectively;
note that $\vphi_{1-s}$ is also an eigenfunction with the same eigenvalues.
In the cusp coordinates this function looks like
\begin{equation}\label{e:vphi2}
\vphi_s(na_{D\bf{t}}k)=\vphi_s(a_{\eta t_n})=e^{st_n}.
\end{equation}

Given a lattice $\Gamma$ (with a cusp at infinity), the spherical Eisenstein series (at infinity) is defined by
\[E(s,g)=E_\Gamma(s,g)=\sum_{\gamma\in \Gamma_\infty\bs \Gamma} \vphi_{s}(\gamma g).\]
This series absolutely converges for $\Re(s)>1$, it is right $K$-invariant and (since the operators $\Omega_j$ commute with the left action of $G$) it is also a joint eigenfunction of the Casimir operators with the same eigenvalues. The constant term of the Eisenstein series is defined as
\[E^o(s,g)=\frac{1}{|\calF_{\calO_\Gamma}|}\int_{\xbf\in \calF_{\calO_\Gamma}} E(s,n_\xbf g)d\xbf,\]
and satisfies
\begin{equation}\label{e:constant}
E^o(s,g)=\vphi_{s}(g)+\calC_\Gamma(s)\vphi_{1-s}(g).
\end{equation}
The function $\calC(s)=\calC_\Gamma(s)$ can be continued to a meromorphic function having no poles on the half plane $\Re(s)\geq 1/2$, except for possibly finitely many simple poles in the interval $(\tfrac{1}{2},1]$ (see \cite[Proposition 6.1]{Efrat87}). The residues at the exceptional poles are all positive, moreover, the residue at $s=1$ is related to the volumes of the fundamental domains as follows (see, e.g. \cite[Lemma 2.15]{Sarnak83})
\begin{equation}\label{e:residue}
\Res_{s=1}(\calC(s))=\frac{|\omega_{\Gamma}|}{v_{\Gamma}}=\frac{R_{\Gamma}2^{n-1}|\calF_{\calO_\Gamma}|}{v_{\Gamma}}.
\end{equation}
In the case where there are $h>1$ cusps, the function $\calC(s)=\calC_{1,1}(s)$ is one of the diagonal coefficients of the scattering matrix $\Phi(s)$. The scattering matrix itself has a meromorphic continuation and satisfies the functional equation $\Phi(s)\Phi(1-s)=I$ and $\Phi(s)^*=\Phi(\bar{s})$.
In particular, on the critical strip $\Re(s)=\tfrac12$ the scattering matrix  is unitary and hence $|\calC(s)|\leq 1$ for $\Re(s)=\tfrac{1}{2}$.
\begin{rem}\label{r:normalizatoin}
We note that our normalization of the Eisenstein series is not the standard one for $\SL_2(\bbC)$. In particular, in our normalization the critical strip is $0\leq \Re(s)\leq 1$ rather than the standard $0\leq \Re(s)\leq 2$. However, we find that this choice is more suitable for working with products of $\SL_2(\bbR)$ and $\SL_2(\bbC)$ simultaneously. The reader should be cautioned that whenever comparing to the literature dealing with $\SL_2(\bbC)$ one should replace $s$ by $2s$.
\end{rem}
\begin{rem}\label{r:CongruenceConstant}
When $\Gamma=\SL_2(\calO_K)$ the constant term (respectively the determinant of the scattering matrix) can be expressed explicitly in terms of the Dedekind Zeta function of $K$ (respectively, the Class field of $K$); see \cite{Efrat87, EfratSarnak85}. In particular, in these cases there are no poles in the half plane $\Re(s)\geq \tfrac{1}{2}$ except for the pole at $s=1$.
\end{rem}

\subsection{Raising and lowering operators}
We will need to consider also Eisenstein series that are not right $K$-invariant. Using suitable raising and lowering operators we can obtain all the information we need from the spherical case. We briefly recall how to construct these operators for $G=\SL_2(\bbR)$ and $G=\SL_2(\bbC)$ separately.

We start with the (simpler) case where $G=\SL_2(\bbR)$ and $K=SO(2)$.
For any $m\in \bbZ$ let $\phi_m\in L^2(M\bs K)$ denote the function $\phi_m(k_\theta)=e^{2im\theta}$ lifted to a function on $G$ by setting $\phi_m(nak)=\phi_m(k)$. We say that a function $\vphi$ on $\SL_2(\bbR)$ is of $K$-weight $m$ if it satisfies $\vphi(gk)=\vphi(g)\phi_m(k)$.

For $s\in \bbC$ we let $\vphi_s$ denote the function on $\SL_2(\bbR)$ given by
$\vphi_s(u_xa_t k)=e^{st}$ and let $\vphi_{s,m}(g)=\vphi_s(g)\phi_m(g)$. Then $\vphi_{s,m}$ and $\vphi_{1-s,m}$ are the unique functions on $N\bs G$ of $K$-weight $m$ that are eigenfunctions of the Casimir operator of $\SL_2(\bbR)$ with eigenvalue $s(1-s)$.

Let $\pi$ denote the right regular representation of $G$. We also denote by $\pi$ the corresponding representation of the Lie algebra $\frakg=\Sl_2(\bbR)$ on $C^\infty(G)$. Fix a basis $h=\begin{smatrix} 1 0 0 {-1}\end{smatrix},\;e=\begin{smatrix} 0 1 0 0\end{smatrix},\; f=\begin{smatrix}0 0 1 0\end{smatrix}$ for $\frakg$ and define raising and lowering operators by
\[a^{\pm}=\pi(h)\pm i(\pi(e)+\pi(f)).\]
These operators send a vector of $K$-weight $m$ to a vector of $K$-weight $m\pm 1$. In particular, $a^{\pm}\vphi_{s,m}$ is of weight $m\pm 1$ and is also an eigenfunction with the same eigenvalue. In particular, we get that $a^{\pm}\vphi_{s,m}$ is a scalar multiple of $\vphi_{s,m\pm1}$.

To compute this scalar (and its dependence on $s$ and $m$) we identify $C^\infty(N\bs G)\cong C^\infty(\bbR^2)$ (with $G$ acting on the right).
In this realization, with the coordinates
\[a_t k_\theta \mapsto (-\sin(\theta)e^{-t/2},\cos(\theta)e^{-t/2})=(x_1,x_2),\]
we have that
$\vphi_{s,m}=\frac{(x_1+ix_2)^{2m}}{(x_1^2+x_2^2)^{s+m}}$ and the raising and lowering operators are given by
\begin{equation}\label{e:ladderR}
a^\pm=x_1\pd{}{x_1}-x_2\pd{}{x_2}\pm i(x_1\pd{}{x_2}+x_2\pd{}{x_1}).
\end{equation}
With these formulas it is not hard to check that
\begin{equation}\label{e:raising1}
a^{\pm}\vphi_{s,m}=-2(s \pm m)\vphi_{s,m\pm 1}.
\end{equation}

We now treat the case $G=\SL_2(\bbC)$; here, $K=\SU(2)$ and $M$ is the group of diagonal unitary matrices.
Consider the representation of $K$ on $L^2(M\bs K)$ acting on the right.
We say that a vector $\vphi\in L^2(M\bs K)$ is of $M$-weight $\ell$ if it satisfies
\[\vphi(k \smatrix{e^{i\theta}}{0}{0}{e^{-i\theta}})=e^{2i\ell\theta}\vphi(k).\]
We can decompose $L^2(M\bs K)$ into irreducible invariant subspaces
\[L^2(M\bs K)=\bigoplus_{m=0}^\infty L^2(M\bs K,m),\]
where $L^2(M\bs K,m)$ is isomorphic to the irreducible representation of $\SU(2)$ of dimension $2m+1$ and it contains a unique (up to scalar multiplication) vector of $M$-weight $l$ for any $|l|\leq m$. We further note that every irreducible representation of $\SU(2)/\{\pm I\}$ occurs in this decomposition exactly once.

Let $\pi$ denote the right representation of $G$ on $L^2(M\bs G)$. We say that a vector $\vphi\in L^2(M\bs G)$ is of $M$-weight $\ell$ if it is of $M$-weight $\ell$ for the restriction of the representation to $K$ (i.e, if $\vphi(g \smatrix{e^{i\theta}}{0}{0}{e^{-i\theta}})=e^{2i\ell\theta}\vphi(g)$).

We use the basis $h,e,f,ig,ie,if$ for $\mathfrak{g}=\Sl_2(\bbC)$ (where $h,e,f$ are as above)
and note that  $ih,w=e-f,v=i(e+f)$ is a basis for the subspace $\su(2)\subseteq \Sl_2(\bbC)$.
We then have that a vector $\vphi$ is of $M$-wight $\ell$ if $\pi(ih)\vphi=2l\vphi$ (where $\pi$ denotes the representation of $\Sl_2(\bbC)$ on $C^\infty(M\bs G)$ corresponding to the right regular representation).

We define the following raising and lowering operators.
\[a^{\pm}=\tfrac{1}{2}(\pi(e)\mp i\pi(ie)),\; b^{\pm}=\tfrac{1}{2}(\pi(f)\pm i\pi(if)),\; a_K^{\pm}=a^{\pm}-b^{\pm}.\]
By looking at the commutation relation with $\pi(ih)$ we see that each of these operators sends a vector of $M$-weight $\ell$ to a vector of $M$-weight $\ell\pm 1$ (or to zero). Moreover, the operator $a_K^\pm=\tfrac{1}{2}(\pi(w)\mp i\pi(v))$ preserves $K$-invariant spaces.

We say that a vector $\vphi\in C^\infty(M\bs G)$ is a vector of highest weight $m$ if it is of $M$-weight $m$ and $a_K^+v=0$ (such a vector is contained in an irreducible $K$-invariant subspace isomorphic to $L^2(M\bs K,m)$). We also note that if $\vphi$ is of highest weight $m$, then $a^+\vphi=b^+\vphi$ is either zero or a vector of highest weight $m+1$ (this is also a direct consequence of the commutation relation).

For any $m\in\bbN$ let $\phi_m\in L^2(M\bs K,m)$ denote a vector of highest weight $m$ and extend it to a function on $G$ by $\phi(nak)=\phi(k)$. Let $\vphi_{s}\in C^\infty(N\bs G/K)$ (extended to a function on $G$) be defined by
$\vphi_{s}(na_tk)=e^{2st}$, and let $\vphi_{s,m}(g)=\vphi_s(g)\phi_m(g)$.
Then $\vphi_{s,m},\vphi_{1-s,m}\in C^\infty(N\bs G)$ are both eigenfunction of the Casimir operator with eigenvalue $4s(1-s)$ and are vectors of highest weight $m$. Consequently $a^+\vphi_{s,m}=b^+\vphi_{s,m}$ is a vector of highest weight $m+1$ and is also an eigenfunction with the same eigenvalue, and hence, a scalar multiple of $\vphi_{s,m\pm 1}$.

In order to compute this scalar we identify $N\bs G\cong \bbC^2$ and use the coordinates $z_1,\bar{z}_1,z_2,\bar{z}_2$ on $\bbC^2\cong \bbR^4$.
In these coordinates, with $G$ acting on the right on $C^\infty(N\bs G)\cong C^\infty(\bbC^2)$ the rasing and lowering operators are given by
\begin{equation}\label{e:ladderC}
a^+=z_1\pd{}{z_2},\quad a^-=\bar{z}_1\pd{}{\bar{z}_2},\quad b^+=\bar{z}_2\pd{}{\bar{z}_1},\quad b^-=z_2\pd{}{z_1},
\end{equation}
the Casimir operator is
\begin{equation}\label{e:CasimirC}
\Omega=(\bar{z}_1\pd{}{\bar{z}_1}+\bar{z}_2\pd{}{\bar{z}_2})^2+2(\bar{z}_1\pd{}{\bar{z}_1}+\bar{z}_2\pd{}{\bar{z}_2})
\end{equation}
and $\vphi_{s,m}=c_m\frac{(z_1\bar{z}_2)^m}{(|z_1|^2+|z_2|^2)^{2s+m}}$. Using \eqref{e:ladderC} we get that
\begin{equation}\label{e:raising2}
a^+\vphi_{s,m}=\kappa_m(2s+m)\vphi_{s,m+1},
\end{equation}
where $\kappa_m\neq 0$ is a constant depending only on $m$ (but not on $s$).

\subsection{Non-spherical Eisenstein series}
We now go back to the general setting where $G=\prod_jG_j$ with $G_j=\SL_2(\bbR)$ for $j\leq r_1$ and $G_j=\SL_2(\bbC)$ for $j>r_1$ and define the non-spherical Eisenstein series (cf. \cite[Chapter II section 2]{Harishchandra68}).

We decompose the representation of $K$ given by the right action on $L^2(M\bs K)$ into irreducible components
\[L^2(M\bs K)=\bigoplus_{m\in\bbZ^{r_1}\times \bbZ_+^{r_2}} L^2(M\bs K,m).\]
Note that every irreducible representation of $\prod_j (K_j/\{\pm I\})$ occurs in this decomposition exactly once.

For any $\phi\in L^2(M\bs K,m)$ (extended to a function on $G$ by $\phi(nak)=\phi(k)$) we attach the Eisenstein series
\[E(\phi,s,g)=\sum_{\gamma\in \Gamma_\infty\bs \Gamma} \vphi_{s}(\gamma g)\phi(\gamma g),\]
and define the constant term $E^o(\phi,s,g)$ in the same way. We use the raising and lowering operators to obtain the analogue of \eqref{e:constant}.
\begin{prop}\label{p:const}
For any $\phi\in L^2(M\bs K,m)$
\[E^o(\phi,s,g)=\bigg(\vphi_{s}(g)+P_m(s)\calC(s)\vphi_{1-s}(g)\bigg)\phi(g),\]
where $\calC(s)$ is as in \eqref{e:constant} and
\begin{equation}\label{e:Pm}
P_{m}(s)=\prod_{j=1}^n \prod_{k=0}^{|m_j|-1} \frac{\mu_j(1-s)+k}{\mu_js+k}.
\end{equation}
\end{prop}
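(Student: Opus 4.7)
The plan is to reduce the general non-spherical identity to the spherical case \eqref{e:constant} by applying raising operators factor by factor. The key observation is that the raising and lowering operators in \eqref{e:ladderR} and \eqref{e:ladderC} come from the right regular representation, hence act as left-invariant differential operators. In particular, they commute with left translations by $\Gamma$, so they pass freely through the summation over $\Gamma_\infty \bs \Gamma$; and they commute with the integration defining the constant term, since that integration is performed against $\xbf \in \calF_{\calO_\Gamma}$ entering through the left multiplication by $n_\xbf$. Consequently, if $R$ is any such differential operator, then $(RE(\phi,s,\cdot))^o = R(E^o(\phi,s,\cdot))$.

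I would first reduce to highest-weight vectors. Both sides of the claimed identity are linear in $\phi$, and a direct check gives $E^o(\phi, s, gk) = E^o(R_k\phi, s, g)$ and $\phi(gk) = (R_k\phi)(g)$, where $R_k$ denotes right translation; since $\vphi_s$ and $\vphi_{1-s}$ are $K$-invariant, the identity is equivariant under the right $K$-action. It therefore suffices to verify the formula for a single highest-weight vector $\phi = \phi_m$ in each irreducible component $L^2(M\bs K, m)$.

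For highest-weight $\phi_m$, the function $\vphi_{s,m} = \vphi_s \phi_m$ is obtained from $\vphi_s$ by successively applying, in each factor $j$, the raising operator $a_j^+$ (or $a_j^-$, when $m_j<0$ in a real factor) a total of $|m_j|$ times. By \eqref{e:raising1} and \eqref{e:raising2}, this produces a scalar of the form
\[
 a(s, m) \;=\; c_m \prod_{j=1}^{n}\prod_{k=0}^{|m_j|-1}(\mu_j s + k),
\]
where $c_m$ depends only on $m$, not on $s$. Applying the same chain of operators to \eqref{e:constant} and invoking the commutation properties above yields
\[
 a(s, m)\, E^o(\phi_m, s, g) \;=\; a(s, m)\,\vphi_{s,m}(g) + a(1-s, m)\, \calC(s)\, \vphi_{1-s,m}(g).
\]
Dividing by $a(s, m)$, the $s$-independent factor $c_m$ cancels and the remaining ratio $a(1-s, m)/a(s, m)$ is precisely $P_m(s)$ of \eqref{e:Pm}. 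Rewriting $\vphi_{s,m} = \vphi_s \phi_m$ and $\vphi_{1-s,m} = \vphi_{1-s}\phi_m$ produces the asserted formula.

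The main bookkeeping obstacle is checking that the two raising conventions collapse cleanly: the prefactors $-2$ in \eqref{e:raising1} and the $s$-free constants $\kappa_k$ in \eqref{e:raising2} must combine into the single $s$-free scalar $c_m$, and negative real weights must be handled by $a_j^-$ with $a_j^- \vphi_{s,-k} = -2(s+k)\vphi_{s,-k-1}$ so as to contribute the same polynomial $\prod_{k=0}^{|m_j|-1}(\mu_j s + k)$. Once this is in place no further analytic input is needed, since the meromorphic continuation is entirely inherited from \eqref{e:constant}.
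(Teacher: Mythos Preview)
Your proposal is correct and follows essentially the same approach as the paper: both start from the spherical constant term \eqref{e:constant}, push through the raising operators (using that they come from the right regular representation and hence commute with both the $\Gamma_\infty\bs\Gamma$ summation and the $N$-averaging), and track the resulting scalar via \eqref{e:raising1} and \eqref{e:raising2}. The only cosmetic differences are that the paper writes the induction one raising step at a time rather than applying the whole chain and dividing by $a(s,m)$, and it passes from highest weight to arbitrary $\phi\in L^2(M\bs K,m)$ by applying the $K$-lowering operators $a_{K_j}^-$ rather than invoking $K$-equivariance of both sides as you do.
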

\begin{proof}
Since $L^2(M\bs K,m)=\bigotimes L^2(M_j\bs K_j,m_j)$ it is enough to prove this for functions of the form
$\phi(k)=\prod_j \phi_j(k_j)$ where each $\phi_j\in L^2(M_j\bs K_j,m_j)$.

We first show this for the specific case when $\phi_m=\prod_j\phi_{m_j}$ where each $\phi_{m_j}$ is of $K_j$-weight $m_j$ for $j\leq r_1$ and maximal weight $m_j$ for $j>r_1$. For simplicity, we assume that $m_j\geq 0$ for $j\leq r_1$ (otherwise the argument is the same with the lowering operator instead of the raising operator). When $m=0$ this is \eqref{e:constant}. Next, applying the raising operators $a_j^+$ (see \eqref{e:raising1} and \eqref{e:raising2}) we get
\[a_j^+ \vphi_s(g)\phi_m(k)=c_m(\mu_js+m_j)\vphi_s(g)\phi_{m+e_j}(k).\]
Since the action of $a_j^+$ commutes with the left action of $G$, it commutes with the $\Gamma$ action, implying that
\[a^+E(\phi_m,s,g)=c_m(\mu_js+m_j)E(\phi_{m+e_j},s,g),\]
and it commutes with the action of $N$ so that
\[a^+E^o(\phi_m,s,g)=c_m(\mu_js+m_j)E^o(\phi_{m+e_j},s,g).\]
Now, by induction, we have that $E^o(\phi_m,s,g)=\left(\vphi_s(g)+\calC(s)P_m(s)\vphi_{1-s}(g)\right)\phi_m(g)$ so that
\[a^+E^o(\phi_m,s,g)=c_m\left((\mu_js+m_j)\vphi_s(g)+\calC(s)P_m(s)(m_j+\mu_j(1-s))\vphi_{1-s}(g)\right)\phi_{m+e_j}(g).\]
Comparing the two we get
\[E^o(\phi_{m+e_j},s,g)=\vphi_s(g)\phi_{m+e_j}(g)+\calC(s)P_{m+e_j}(s)\vphi_{1-s}(g)\phi_{m+e_j}(g).\]

This proves the result for $\phi=\prod_j\phi_{m_j}$ with each $\phi_{m_j}$ of maximal weight. Next, applying the lowering operators $a_{K_j}^-$ (for $j>r_1$) we get that the same formula is satisfied by any $\phi=\prod_j \phi_j$ with $\phi_j\in L^2(M_j\bs K_j,m_j)$ of arbitrary $M$-weights.
\end{proof}

\subsection{Explicit formula}
We are now in a position to give upper and lower bounds for $\norm{\Theta_f}^2$ that depend explicitly on the poles of the constant term $\calC(s)$ (cf. \cite[Page 108]{Harishchandra68}).

For each one of the spaces $L^2(M\bs K,m)$ we fix an orthonormal basis
$$\{\phi_{m,l}|\;l\in \bbZ^{r_2},|l_j|\leq |m_j|\}.$$
For any $f\in C^\infty(Q\bs G)$ let
\begin{equation}\label{e:fml}
\hat{f}_{m,l}(a)=\int_K f(ak)\overline{\phi_{m,l}(k)}dk,
\end{equation}
and define the following function
\begin{equation}\label{e:Mf}
M_f(s)=\sum_{m,l}P_m(s)\left|\int_\bbR \hat{f}_{m,l}(a_{\eta t})e^{-st}dt\right|^2,
\end{equation}
with $P_m(s)$ as in \eqref{e:Pm}. We then have
\begin{prop}\label{p:ThetaMain}
Let $\{s_1,\ldots,s_p\}\subseteq(\tfrac{1}{2},1)$ denote the exceptional poles of $\calC_\Gamma(s)$ (if they exist) and let $c_j$ denote the residue at $s_j$ and $c_0=\frac{|\omega_\Gamma|}{v_\Gamma}$ the residue at $s_0=1$. Then
 \begin{enumerate}
 \item For all $f\in C^\infty_c(Q\bs G)$ we have the upper bound
\[\norm{\Theta_f}^2\leq 2c_0\norm{f}_2^2+
c_0^2\norm{f}_1^2+c_0\sum_{j=1}^{p}c_jM_f(s_j).\]
\item For all positive $f\in C^\infty_c(Q\bs G)$ we have the lower bound
\[\norm{\Theta_f}^2\geq c_0^2\norm{f}_1^2+c_0\sum_{j=1}^{p}c_jM_f(s_j).\]
\end{enumerate}
\end{prop}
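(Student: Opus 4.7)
The plan is to apply the spectral decomposition of $L^2(\Gamma\bs G)$ to $\Theta_f$ and compute each spectral piece separately, exploiting the fact that the $Q$-invariance of $f$ drastically simplifies the pairings.

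First I would show that $\Theta_f$ is orthogonal to the cuspidal subspace. By Lemma \ref{l:unfolding} applied with $F=\bar\psi$ for any cusp form $\psi$,
\[
\langle\Theta_f,\psi\rangle = \int_{\Gamma_\infty\bs G} f(g)\overline{\psi(g)}\,d\sigma(g),
\]
and since $f$ is $Q$-invariant (hence $N$-invariant), the $\xbf$-integration in the cusp coordinates isolates the zeroth Fourier coefficient of $\bar\psi$ along $N$, i.e.\ the constant term of $\psi$ at the cusp at infinity, which vanishes. Hence only the constant, residual, and continuous parts of the spectrum contribute.

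Next I would compute $\langle\Theta_f,E(\phi_{m,l},s,\cdot)\rangle$. The same unfolding plus the $N$-invariance of $f$ reduces the pairing to an integral against the constant term $E^o(\phi_{m,l},s,g)$. Substituting Proposition \ref{p:const} and integrating explicitly in the cusp coordinates (using the measure identity \eqref{e:volQ}) yields
\[
\langle\Theta_f,E(\phi_{m,l},s,\cdot)\rangle = c_0\bigl[\tilde f_{m,l}(1-\bar s) + \overline{P_m(s)\calC(s)}\,\tilde f_{m,l}(\bar s)\bigr],
\]
where $\tilde f_{m,l}(s)=\int_\bbR \hat f_{m,l}(a_{\eta t})e^{-st}\,dt$. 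Specializing to $s=1$, $(m,l)=(0,0)$ gives $\langle\Theta_f,1\rangle = c_0\|f\|_1$, so the projection of $\Theta_f$ onto the constant function contributes $c_0^2\|f\|_1^2$.

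For each exceptional pole $s_j\in(\tfrac12,1)$ of $\calC$, the residues $R_j(\phi_{m,l}):=\Res_{s=s_j}E(\phi_{m,l},s,\cdot)$ span a finite-dimensional subspace of $L^2_{\mathrm{res}}$, with constant term $c_jP_m(s_j)\vphi_{1-s_j}\phi_{m,l}$ by Proposition \ref{p:const}. A Maass--Selberg computation, unfolding $\|R_j(\phi_{m,l})\|^2$ against the constant term of $R_j$ itself, gives $\|R_j(\phi_{m,l})\|^2 = c_0c_jP_m(s_j)$; taking residues in the pairing formula above gives $\langle\Theta_f,R_j(\phi_{m,l})\rangle = c_0c_jP_m(s_j)\overline{\tilde f_{m,l}(s_j)}$. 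Summing $|\langle\Theta_f,R_j\rangle|^2/\|R_j\|^2$ over $(m,l)$ produces exactly $c_0c_jM_f(s_j)$.

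Finally, the continuous contribution is estimated on $\Re(s)=\tfrac12$, where $|\calC(s)|\le1$ (unitarity of the scattering matrix) and $|P_m(s)|\le1$. From the pairing formula and $|a+b|^2\le 2(|a|^2+|b|^2)$,
\[
|\langle\Theta_f,E(\phi_{m,l},\tfrac12+it,\cdot)\rangle|^2 \le 2c_0^2\bigl(|\tilde f_{m,l}(\tfrac12-it)|^2+|\tilde f_{m,l}(\tfrac12+it)|^2\bigr).
\]
Mellin--Plancherel in $t$, Parseval in $K$, and the measure identity $\int_{Q\bs G}|f|^2\,dg=\int\!\int|f(a_{\eta t}k)|^2e^{-t}\,dk\,dt$ convert the spectral sum-integral to $\|f\|_2^2$, giving a continuous contribution of at most $2c_0\|f\|_2^2$. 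Combining yields the upper bound. The lower bound follows by dropping the nonnegative continuous contribution and noting that for positive $f$ each $M_f(s_j)\ge 0$ (since $P_m(s_j)>0$ on $(\tfrac12,1)$).

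The main technical obstacles will be (1) verifying the Maass--Selberg normalization $\|R_j(\phi_{m,l})\|^2=c_0c_jP_m(s_j)$: in the presence of $h>1$ cusps one must work with the full scattering matrix $\Phi$ and check that only its diagonal entry $\calC=\calC_{1,1}$ survives in the final formula, since $\Theta_f$ is built at the cusp at infinity; and (2) tracking the Plancherel normalization carefully so the continuous-spectrum bound is linear rather than quadratic in $c_0$, matching the advertised constant $2c_0$.
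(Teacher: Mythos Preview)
Your spectral-decomposition approach is correct and will produce the stated bounds, but the paper takes a different and more self-contained route. Rather than projecting $\Theta_f$ onto the cuspidal, residual, and continuous subspaces, the paper first decomposes $f=\sum_{m,l}f_{m,l}$ into $K$-types, writes each $f_{m,l}(a_{\eta t}k)=v(t)\phi_{m,l}(k)$, and expresses $v$ by Mellin inversion so that $\Theta_{f_{m,l}}$ becomes a contour integral of Eisenstein series $E(\phi_{m,l},s,\cdot)$ over a vertical line $\Re(s)=\sigma>1$. It then takes the constant term along $N$, inserts Proposition~\ref{p:const}, and shifts the contour to $\Re(s)=\tfrac12$, picking up the residues at $s_0=1$ and at the exceptional $s_j$. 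The norm $\norm{\Theta_{f_{m,l}}}^2$ is then computed via the second unfolding identity $\norm{\Theta_f}^2=\int_{\Gamma_\infty\bs G}\bar f\,\Theta_f\,d\sigma$; since $f$ is $N$-invariant this integral sees only the constant term of $\Theta_f$, and everything reduces to one-variable Plancherel on $\bbR$ together with the bound $|\calC(\tfrac12+ir)P_m(\tfrac12+ir)|\le 1$.

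The payoff of the paper's argument is that it never invokes the spectral theorem, the Maass--Selberg relations, or the Plancherel measure for the Eisenstein transform, and it never sees the other cusps at all: only the diagonal entry $\calC=\calC_{1,1}$ enters, automatically. By contrast, your approach is more conceptual but forces you to confront exactly the two issues you flag---the normalization $\norm{R_j(\phi_{m,l})}^2=c_0c_jP_m(s_j)$ and, when $h>1$, the contribution of Eisenstein series attached to the other cusps to both the continuous and residual pieces (where off-diagonal scattering entries $\calC_{i,1}$ appear and must be controlled via unitarity of the full matrix $\Phi$). These can be carried out, but the paper's double-unfolding trick sidesteps them entirely.
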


We will postpone the proof of Proposition \ref{p:ThetaMain} to the end of this section. We now show how it implies Theorem \ref{t:thetabound} and Theorem \ref{t:ThetaR}.

\subsection{Proof of Theorem \ref{t:thetabound}}
First recall that for $\Gamma=\SL_2(\calO_K)$ there are no poles in $(\tfrac{1}{2},1)$ (see remark \ref{r:CongruenceConstant}).
In particular for these lattices Proposition \ref{p:ThetaMain} directly implies
\begin{cor}\label{c:congruence}
For $\Gamma=\SL_2(\calO_K)$ any $f\in C^\infty_c(Q\bs G)$ satisfies
\[\norm{\Theta_f}^2 \leq 2\frac{|\omega_{\Gamma}|}{v_\Gamma}\norm{f}_2^2+
(\frac{|\omega_{\Gamma}|}{v_{\Gamma}})^2\norm{f}_1^2.\]
\end{cor}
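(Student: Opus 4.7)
The plan is very short, since the corollary is essentially a direct specialization of Proposition \ref{p:ThetaMain}(1) to the particular case of principal congruence lattices. The entire argument consists of verifying that the sum over exceptional poles appearing in the upper bound of Proposition \ref{p:ThetaMain} is vacuous for $\Gamma = \SL_2(\calO_K)$, and then reading off the remaining two terms.

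My first step would be to invoke Remark \ref{r:CongruenceConstant}: for $\Gamma = \SL_2(\calO_K)$ the constant term $\calC_\Gamma(s)$ is known explicitly in terms of the Dedekind zeta function of $K$ (and, when $h > 1$, the determinant of the scattering matrix is expressible via the class field of $K$), and in particular it has no poles in the half plane $\Re(s) \geq \tfrac{1}{2}$ other than the simple pole at $s = 1$. In the notation of Proposition \ref{p:ThetaMain}, this means that the set $\{s_1,\ldots,s_p\} \subseteq (\tfrac{1}{2},1)$ of exceptional poles is empty, so $p = 0$ and the sum $c_0 \sum_{j=1}^p c_j M_f(s_j)$ is vacuous. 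The only surviving residue is $c_0 = |\omega_\Gamma|/v_\Gamma$ at $s_0 = 1$, which was computed in \eqref{e:residue}.

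The second step is to substitute these values into the upper bound in Proposition \ref{p:ThetaMain}(1). With $p = 0$ the bound reads
\[
\norm{\Theta_f}^2 \;\leq\; 2c_0\norm{f}_2^2 + c_0^2\norm{f}_1^2,
\]
and plugging in $c_0 = |\omega_\Gamma|/v_\Gamma$ gives exactly the claimed inequality. There is no genuine obstacle here; the work is all hidden in Proposition \ref{p:ThetaMain} and in the classical analytic fact (Remark \ref{r:CongruenceConstant}) that the Eisenstein series for $\SL_2(\calO_K)$ has no exceptional poles. Once those are in hand, the corollary is simply a matter of bookkeeping.
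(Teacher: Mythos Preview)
Your proposal is correct and follows exactly the paper's own argument: the paper simply notes that for $\Gamma=\SL_2(\calO_K)$ there are no exceptional poles in $(\tfrac{1}{2},1)$ (Remark~\ref{r:CongruenceConstant}), so Proposition~\ref{p:ThetaMain}(1) with $p=0$ and $c_0=|\omega_\Gamma|/v_\Gamma$ immediately yields the stated bound.
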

We note that (for $n=1$) there are non arithmetic lattices, and there are also arithmetic (non-congruence) lattices for which $\calC_\Gamma(s)$ has nontrivial poles in $(\tfrac{1}{2},1)$. In fact, as noticed by Selberg \cite{Selberg65}, there are arithmetic lattices with poles arbitrarily close to $1$. For these we need to control the contribution of the terms $M_f(s_j)$.
That is, we need a bound of the form
\begin{equation}\label{e:hypothesis}
\forall s\in(\tfrac{1}{2},1),\quad M_f(s)\lesssim_s \norm{f}_2^2+\norm{f}_1^2.
\end{equation}

Though we suspect that such a bound should hold in general, we were not able to prove it by analyzing the terms $M_f(s)$ directly. Instead, we can get the desired estimate by comparing the norms of theta functions corresponding to different lattices.
To do this we need the following simple lemma.
\begin{lem}\label{l:SG}
Let $\Lambda\subseteq \Gamma$ be a subgroup of finite index. Then any positive $f\in C^\infty_c(Q\bs G)$ satisfies
\[\norm{\Theta_f^{\Lambda}}^2\leq \frac{[\Gamma_\infty:\Lambda_\infty]^2}{[\Gamma:\Lambda]} \norm{\Theta_f^{\Gamma}}^2,\]
where the norms are taken in $L^2(\Lambda\bs G,\sigma_{\Lambda})$ and $L^2(\Gamma\bs G,\sigma_\Gamma)$ respectively.
\end{lem}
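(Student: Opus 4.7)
The plan is to relate $\Theta_f^\Lambda$ and $\Theta_f^\Gamma$ through a bookkeeping identity on cosets and then exploit positivity of $f$. Fix representatives $\gamma_1,\ldots,\gamma_m$ for $\Lambda\bs\Gamma$ (so $m=[\Gamma:\Lambda]$). I would first unfold each $\Theta_f^\Lambda(\gamma_i g)$ by its definition and observe that as $i$ varies and $[\lambda]$ ranges over $\Lambda_\infty\bs\Lambda$, the products $\lambda\gamma_i$ enumerate $\Lambda_\infty\bs\Gamma$ without repetition; this gives
$$
\sum_{i=1}^m \Theta_f^\Lambda(\gamma_i g)=\sum_{[\gamma]\in \Lambda_\infty\bs\Gamma}f(\gamma g).
$$
Since $\Lambda_\infty\subseteq\Gamma_\infty$ has finite index $[\Gamma_\infty:\Lambda_\infty]$, each coset in $\Gamma_\infty\bs\Gamma$ decomposes into that many cosets in $\Lambda_\infty\bs\Gamma$; the key ingredient is that $\Gamma_\infty\subseteq Q$, so by $Q$-invariance of $f$ all $[\Gamma_\infty:\Lambda_\infty]$ subordinate terms contribute the same value. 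This yields the pointwise identity
$$
\sum_{i=1}^m \Theta_f^\Lambda(\gamma_i g)=[\Gamma_\infty:\Lambda_\infty]\,\Theta_f^\Gamma(g).
$$

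Next I would exploit positivity: since $f\geq 0$, each $\Theta_f^\Lambda(\gamma_i g)$ is nonnegative, so the elementary inequality $\sum_i a_i^2 \leq (\sum_i a_i)^2$ for $a_i\geq 0$ gives the pointwise bound
$$
\sum_{i=1}^m |\Theta_f^\Lambda(\gamma_i g)|^2 \leq [\Gamma_\infty:\Lambda_\infty]^2\,|\Theta_f^\Gamma(g)|^2.
$$
Integrating over a fundamental domain for $\Gamma$ and recognizing the left-hand side as the unfolding of $\int_{\Lambda\bs G}|\Theta_f^\Lambda|^2\,dg$ along the $[\Gamma:\Lambda]$-fold cover $\Lambda\bs G\to\Gamma\bs G$, one gets
$$
\int_{\Lambda\bs G}|\Theta_f^\Lambda|^2\,dg \leq [\Gamma_\infty:\Lambda_\infty]^2\int_{\Gamma\bs G}|\Theta_f^\Gamma|^2\,dg.
$$
Passing from Haar measure to the probability measures $\sigma_\Lambda=dg/v_\Lambda$ and $\sigma_\Gamma=dg/v_\Gamma$, and using $v_\Lambda=[\Gamma:\Lambda]v_\Gamma$, produces exactly the claimed factor $[\Gamma_\infty:\Lambda_\infty]^2/[\Gamma:\Lambda]$.

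There is no serious obstacle here; the argument is essentially combinatorial bookkeeping plus an elementary inequality. The one step requiring care is the reduction from $\Lambda_\infty\bs\Gamma$ to $\Gamma_\infty\bs\Gamma$, where the $Q$-invariance of $f$ is exactly what allows the inner sum over $\Lambda_\infty\bs\Gamma_\infty$ to be replaced by the scalar $[\Gamma_\infty:\Lambda_\infty]$. Without the positivity assumption one would obtain only the pointwise identity rather than the desired norm inequality, and without $Q$-invariance the identity itself would fail; with both in hand the lemma follows immediately.
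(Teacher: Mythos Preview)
Your argument is correct. It differs from the paper's proof in how positivity is used and where the unfolding happens. The paper bounds $\Theta_f^\Lambda(g)$ pointwise by $[\Gamma_\infty:\Lambda_\infty]\,\Theta_f^\Gamma(g)$ (extending the sum from $\Lambda_\infty\bs\Lambda$ to $\Lambda_\infty\bs\Gamma$ using $f\geq 0$) and then plugs this into the unfolded identity $\norm{\Theta_f^\Lambda}^2=\int_{\Lambda_\infty\bs G}\bar f\,\Theta_f^\Lambda\,d\sigma_\Lambda$ from Lemma~\ref{l:unfolding}. You instead establish the exact pointwise identity $\sum_i\Theta_f^\Lambda(\gamma_i g)=[\Gamma_\infty:\Lambda_\infty]\,\Theta_f^\Gamma(g)$ and only then invoke positivity through $\sum a_i^2\leq(\sum a_i)^2$, followed by a direct integration over $\Gamma\bs G$. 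Your route is self-contained in that it does not rely on the unfolding lemma, while the paper's is shorter once that lemma is available; both hinge on the same observation that $Q$-invariance of $f$ collapses the $\Lambda_\infty\bs\Gamma_\infty$ sum to a scalar factor.
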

\begin{proof}
From positivity of $f$ we have,
\[\Theta_f^{\Lambda}(g)\leq \sum_{\Lambda_{\infty}\bs\Gamma} f(\gamma g)=[\Gamma_{\infty}:\Lambda_\infty] \Theta_f^{\Gamma}(g).\]
Plugging this positivity bound in the identity  \eqref{e:unfold2}
we get
\begin{eqnarray*}
\norm{\Theta_f^\Lambda}^2&=& \int_{\Lambda_\infty\bs G}  \overline{f(g)} \Theta_f^{\Lambda}(g)d\sigma_{\Lambda}(g)\\
&\leq &[\Gamma_\infty:\Lambda_{\infty}]\int_{\Lambda_\infty\bs G}  \overline{f(g)} \Theta_f^{\Gamma}(g)d\sigma_{\Lambda}(g)\\
&=&\frac{[\Gamma_\infty:\Lambda_{\infty}]^2}{[\Gamma:\Lambda]}\int_{\Gamma_\infty\bs G}  \overline{f(g)} \Theta_f^{\Gamma}(g)d\sigma_{\Gamma}(g),
\end{eqnarray*}
where the last equality can be seen by writing the fundamental domain for $\Lambda_\infty\bs G$ is a union of $[\Gamma_{\infty}:\Lambda_\infty]$ translations of a fundamental domain for $\Gamma_\infty\bs G$ and noting that both $f$ and $\Theta_f^{\Gamma}$ are invariant under $\Gamma_{\infty}$. Using \eqref{e:unfold2} again, this time for $\Gamma$, concludes the proof.

\end{proof}


We can now prove the bound \eqref{e:hypothesis} for any value of $s\in (\tfrac{1}{2},1)$ that occurs as a pole for some arithmetic lattice.
\begin{prop}\label{p:Mfbnd}
Let $\tilde{\Gamma}\subseteq G$ denote an arithmetic lattice and let $s_1\in (\tfrac{1}{2},1)$ be a pole of $\calC_{\tilde{\Gamma}}(s)$.
We then have that for all positive $f\in C^\infty_c(Q\bs G)$
\[M_f(s_1)\lesssim_{s_1,\tilde{\Gamma}} \norm{f}_2^2+\norm{f}_1^2.\]
\end{prop}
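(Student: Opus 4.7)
The plan is to reduce to the congruence case, Corollary~\ref{c:congruence}, via a commensurability argument. I will sandwich $\|\Theta_f^{\tilde\Gamma}\|^2$ between an upper bound inherited from $\SL_2(\calO_K)$ and the lower bound of Proposition~\ref{p:ThetaMain}(2) applied directly to $\tilde\Gamma$. Since $\tilde\Gamma$ is arithmetic, it is commensurable with some conjugate $\Gamma_0$ of $\SL_2(\calO_K)$ sharing the cusp at infinity with $\tilde\Gamma$; the conjugation only changes the constants in Corollary~\ref{c:congruence} by a harmless factor. Let $\Lambda$ be the normal core in $\tilde\Gamma$ of $\tilde\Gamma\cap\Gamma_0$. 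Then $\Lambda$ is a normal subgroup of $\tilde\Gamma$ of finite index, and is simultaneously a finite index subgroup of $\Gamma_0$ retaining the cusp at infinity.

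The upper bound comes in two steps. First, Lemma~\ref{l:SG} applied to $\Lambda\subseteq\Gamma_0$ combined with Corollary~\ref{c:congruence} yields $\|\Theta_f^{\Lambda}\|^2 \lesssim_{\Gamma_0,\Lambda} \|f\|_2^2+\|f\|_1^2$. Second, I would establish a reverse version of Lemma~\ref{l:SG}, namely $\|\Theta_f^{\tilde\Gamma}\|^2 \lesssim_{\tilde\Gamma,\Lambda} \|\Theta_f^{\Lambda}\|^2$. Fixing coset representatives $\{\gamma_i\}_{i=1}^{[\tilde\Gamma:\Lambda]}$ for $\Lambda\bs\tilde\Gamma$ and $\{\delta_j\}_{j=1}^{[\tilde\Gamma_\infty:\Lambda_\infty]}$ for $\Lambda_\infty\bs\tilde\Gamma_\infty$, a double-coset decomposition of the sum defining $\Theta_f^{\Lambda}$, together with the $Q$-invariance of $f$ (noting $\tilde\Gamma_\infty\subseteq Q$, by Shimizu's description in Section~\ref{sec;notation}), yields the identity
\[
[\tilde\Gamma_\infty:\Lambda_\infty]\,\Theta_f^{\tilde\Gamma}(g) \;=\; \sum_{i=1}^{[\tilde\Gamma:\Lambda]}\Theta_f^{\Lambda}(\gamma_i g).
\]
Applying Cauchy-Schwarz pointwise in $g$ and then integrating over $\tilde\Gamma\bs G$ gives the desired reverse bound; normality of $\Lambda$ in $\tilde\Gamma$ is used here to ensure that each translate $g\mapsto\Theta_f^{\Lambda}(\gamma_i g)$ is again $\Lambda$-invariant with $L^2(\Lambda\bs G)$-norm equal to $\|\Theta_f^{\Lambda}\|^2$. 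Chaining the two estimates produces $\|\Theta_f^{\tilde\Gamma}\|^2 \lesssim_{\tilde\Gamma} \|f\|_2^2+\|f\|_1^2$.

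For the matching lower bound, apply Proposition~\ref{p:ThetaMain}(2) directly to $\tilde\Gamma$. Since $s_1$ is by assumption a pole of $\calC_{\tilde\Gamma}(s)$ with positive residue $c_{s_1}^{\tilde\Gamma}>0$, and since all summands on the right-hand side are nonnegative for positive $f$ (using that each $P_m(s_1)$ is positive on $(\tfrac12,1)$), we have
\[
\|\Theta_f^{\tilde\Gamma}\|^2 \;\geq\; c_0^{\tilde\Gamma}\,c_{s_1}^{\tilde\Gamma}\,M_f(s_1).
\]
Combining with the upper bound yields $M_f(s_1) \lesssim_{s_1,\tilde\Gamma} \|f\|_2^2+\|f\|_1^2$, as claimed. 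The main obstacle is the derivation of the reverse of Lemma~\ref{l:SG}: producing the algebraic identity cleanly, verifying that the $Q$-invariance of $f$ together with the inclusion $\tilde\Gamma_\infty\subseteq Q$ suffices for the relevant cancellations in the double-coset decomposition, and tracking the volume normalizations between fundamental domains of $\tilde\Gamma\bs G$ and $\Lambda\bs G$ so that the final norm comparison depends only on $\tilde\Gamma$ and $\Lambda$ (and not on $f$).
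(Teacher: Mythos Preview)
Your argument is correct but takes a genuinely different route from the paper. The paper never bounds $\|\Theta_f^{\tilde\Gamma}\|^2$ at all: instead it applies the lower bound of Proposition~\ref{p:ThetaMain}(2) directly to the common finite-index subgroup $\Lambda=\tilde\Gamma\cap\Gamma$ (with $\Gamma=\SL_2(\calO_K)$), after observing that any pole $s_1$ of $\calC_{\tilde\Gamma}$ is automatically also a pole of $\calC_\Lambda$. Then a single application of Lemma~\ref{l:SG} (for the inclusion $\Lambda\subset\Gamma$) together with Corollary~\ref{c:congruence} gives the chain
\[
c_j M_f(s_1)\;\leq\;\|\Theta_f^\Lambda\|^2\;\leq\;\tfrac{[\Gamma_\infty:\Lambda_\infty]^2}{[\Gamma:\Lambda]}\|\Theta_f^{\Gamma}\|^2\;\lesssim\;\|f\|_1^2+\|f\|_2^2,
\]
and no ``reverse'' of Lemma~\ref{l:SG} is needed. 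Your approach instead keeps the lower bound at the level of $\tilde\Gamma$ and manufactures the matching upper bound by first going \emph{down} to $\Lambda$ via the identity $[\tilde\Gamma_\infty:\Lambda_\infty]\Theta_f^{\tilde\Gamma}(g)=\sum_i\Theta_f^\Lambda(\gamma_i g)$ with Cauchy--Schwarz, and then up to $\Gamma_0$ via Lemma~\ref{l:SG}. The trade-off: the paper's route is shorter and uses only the existing Lemma~\ref{l:SG}, but it relies on the (true, though not entirely trivial) spectral fact that exceptional poles persist when passing to a finite-index subgroup; your route avoids that claim at the cost of proving the reverse comparison, and incidentally yields Theorem~\ref{t:thetabound} for $\tilde\Gamma$ as a byproduct. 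Both are valid; the paper's is the more economical path to this particular proposition.
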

\begin{proof}
The condition that $\tilde{\Gamma}$ is arithmetic implies that it is commensurable to $\Gamma=\SL_2(\calO_K)$. Let $\Lambda=\Gamma\cap \tilde{\Gamma}$, then $\Lambda$ is of finite index in both. In particular, if $s_j\in (\tfrac{1}{2},1)$ is a pole of $\calC_{\tilde{\Gamma}}(s)$ then it is also a pole of $\calC_{\Lambda}(s)$ with a positive residue $c_j>0$. We then have, from the second part of Proposition \ref{p:ThetaMain} together with Lemma \ref{l:SG} and Corollary \ref{c:congruence}, that for any positive $f\in C^\infty(Q\bs G)$
\[c_jM_f(s_j)\leq \norm{\Theta_f^\Lambda}^2\leq [\Gamma_\infty:\Lambda_\infty]\norm{\Theta_f^{\Gamma}}^2\lesssim \norm{f}_1^2+\norm{f}_2^2.\]
\end{proof}
Theorem \ref{t:thetabound} now follows directly from Propositions \ref{p:Mfbnd} and the first part of Proposition \ref{p:ThetaMain}.

\subsection{Proof of Theorem \ref{t:ThetaR} (the $\SL_2(\bbR)$ case)}
Here we allow $\Gamma\subseteq G=\SL_2(\bbR)$ to be an arbitrary non-uniform lattice and we consider the following family of functions:
For any large $\lambda>1$ and small $\epsilon>0$ we let $f^{(\lambda)}\in C^\infty_c(Q\bs G)$ be given by
\[f^{(\lambda)}(n_xa_{t}k_\theta)=v_\lambda(t)\psi_\lambda(\theta),\]
where $v_\lambda(t)$ is supported on $[-(1+\epsilon)\log(\lambda),0]$ takes values in $[0,1]$ and equals $1$ on the interval $[-(1+\epsilon)\log(\lambda)+1,-1]$ and $\psi_\lambda(x)=\psi(\lambda x)$ with $\psi\in C^\infty(\bbR)$ compactly supported and takes values in $[0,1]$.

Note that for these functions $\norm{f^{(\lambda)}}_2^2\asymp \norm{f^{(\lambda)}}_1\asymp \lambda^{\epsilon}$.
Hence, by Proposition \ref{p:ThetaMain}, it is enough to show that
\[M_{f^{(\lambda)}}(s)\lesssim_s \lambda^{2\epsilon} \mbox{ for all } s\in(\tfrac{1}{2},1).\]
For our specific family of functions we also have
\[M_{f^{(\lambda)}}(s)=\left(\sum_m P_m(s)|\hat{\psi}_\lambda(2m)|^2\right)|\int_\bbR v_\lambda(t)e^{-st}dt|^2\asymp
\lambda^{2s+2\epsilon-2}\left(\sum_m P_m(s)|\hat{\psi}(\frac{2m}{\lambda})|^2\right).\]
We can estimate for $s\in (\tfrac{1}{2},1)$
\[P_m(s)=\prod_{k=0}^{m-1}\frac{1-s+k}{s+k}\asymp_s \frac{1}{(m+1)^{2s-1}}.\]
Indeed
\begin{eqnarray*}\log(P_m(s))&=&\log(\frac{s-1}{s})+\sum_{k=1}^{m-1}(\log(1+\tfrac{1-s}{k})-\log(1+\tfrac{s}{k}))\\
&=&(1-2s)\sum_{k=1}^{m-1}\frac{1}{k}+O_s(1)=(1-2s)\log(m+1)+O_s(1)
\end{eqnarray*}
We thus get that
\[M_{f^{(\lambda)}}(s)\asymp_s \lambda^{2(s-1)+2\epsilon}\sum_{m=1}^\infty m^{1-2s}|\hat{\psi}(\frac{2m}{\lambda})|^2.\]
For any $m\leq 2\lambda$ estimate $|\hat{\psi}(\frac{2m}{\lambda})|\lesssim 1$ while for $m\in[2\lambda k, 2\lambda(k+1)]$ we can estimate $|\hat{\psi}(\frac{2m}{\lambda})|\lesssim \frac{1}{k}$. We thus get that
\begin{eqnarray*}
\sum_m m^{1-2s}|\hat{\psi}(\frac{2m}{\lambda})|^2&=& \sum_{k=0}^\infty \sum_{m=2\lambda k+1}^{2\lambda(k+1)}m^{1-2s}|\hat{\psi}(\frac{2m}{\lambda})|^2\\
&\lesssim &  \sum_{m=1}^{2\lambda} m^{1-2s}+ \lambda^{2-2s}\sum_{k=1}^\infty k^{1-2s-2} \lesssim  \lambda^{2(1-s)},
\end{eqnarray*}
so, indeed,  $M_{f^{(\lambda)}}(s)\lesssim_s \lambda^{2\epsilon}$ for any $s\in (\tfrac{1}{2},1)$.
\begin{rem}
We note that this bound is optimal. Indeed, if we assume that both $\psi$ and $\hat{\psi}$ are positive
then the same argument also gives a lower bound $M_{f^{(\lambda)}}(s)\gtrsim_s \lambda^{2\epsilon}$. Moreover, for the end point $s=1$ we get
$M_{f^{(\lambda)}}(1)\gtrsim \lambda^{2\epsilon}\log(\lambda)$ so the condition that $s<1$ is crucial.
\end{rem}

\subsection{Proof of Theorem \ref{t:ThetaR} (the $\SL_2(\bbC)$ case)}
Let $\Gamma\subseteq G=\SL_2(\bbC)$ be an arbitrary non-uniform lattice and consider the following family of functions:
For any large $\lambda>1$ and small $\epsilon>0$ we let $f^{(\lambda)}\in C^\infty_c(Q\bs G)$ be given by
\[f^{(\lambda)}(n_xa_{t/2}k)=v_\lambda(t)\phi^{(\lambda)}(k),\]
where $v_\lambda(t)$ is supported on $[-(3+\epsilon)\log(\lambda),0]$ takes values in $[0,1]$ and equals $1$ on the interval $[-(3+\epsilon)\log(\lambda)+1,-1]$, and $\phi^{(\lambda)}\in C^\infty(M\bs K)$ is given by
\[\phi^{(\lambda)}(k_{\theta,\alpha,\beta})=\psi_\lambda(\sin(\theta),(\alpha-\beta)),\]
where $\psi$ is a smooth compactly supported function on $\bbR^2$ and $\psi_\lambda(x_1,x_2)=\psi(\lambda x_1,\lambda x_2)$. (Note that any smooth function on $M\bs K$ can be written as $\phi(k_{\theta,\alpha,\beta})=\psi(\sin(\theta),\alpha-\beta)$.)

As above, in order to prove the theorem it is enough to show that
$M_{f^{(\lambda)}}(s)\lesssim_s \norm{f^{(\lambda)}}_1^2\asymp \lambda^{2\epsilon}$ for all $s\in (\tfrac{1}{2},1)$.
Here we were not able to prove it directly as before.
Instead, we will show that for $\lambda$ sufficiently large $M_{f^{(\lambda)}}(s)$ is an increasing function of $s$.
The result will then follow from the bounds for arithmetic lattices and the fact that there are arithmetic lattices with poles arbitrary close to one.

We first need a few preliminary estimates.
\begin{lem}
If $f\in C^\infty(Q\bs G)$ factors as $f(a_{\eta t}k)=v(t)\phi(k)$ then $M_f(s)$ also factors as
\[M_f(s)=\left(\sum_{m=0}^\infty P_m(s)\norm{\phi_m}_2^2\right)|\int_\bbR v(t)e^{-st}dt|^2,\]
 where $\phi_m$ denotes the projection of $\phi$ to $L^2(M\bs K,m)$.
\end{lem}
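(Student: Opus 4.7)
The plan is a direct computation from the definitions. First I would substitute the factorized form $f(a_{\eta t}k) = v(t)\phi(k)$ into the definition
$$\hat{f}_{m,l}(a_{\eta t}) = \int_K f(a_{\eta t}k)\,\overline{\phi_{m,l}(k)}\,dk = v(t)\int_K \phi(k)\,\overline{\phi_{m,l}(k)}\,dk = v(t)\,\langle \phi,\phi_{m,l}\rangle,$$
where the inner product is in $L^2(M\bs K)$. Since the $t$-dependence and the $K$-dependence have now separated, the inner integral in the definition of $M_f(s)$ factors as well:
$$\int_\bbR \hat{f}_{m,l}(a_{\eta t})e^{-st}\,dt = \langle \phi,\phi_{m,l}\rangle \int_\bbR v(t)e^{-st}\,dt.$$

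Next I would substitute this into the formula for $M_f(s)$ and pull out the factor that is independent of $m,l$:
$$M_f(s) = \left|\int_\bbR v(t)e^{-st}\,dt\right|^2 \sum_{m,l} P_m(s)\,|\langle\phi,\phi_{m,l}\rangle|^2.$$
Finally I would handle the inner double sum by grouping on $m$. For fixed $m$, the vectors $\{\phi_{m,l}\}_{|l_j|\leq |m_j|}$ form an orthonormal basis of $L^2(M\bs K,m)$, so by Parseval's identity
$$\sum_{l} |\langle \phi,\phi_{m,l}\rangle|^2 = \|\phi_m\|_2^2,$$
where $\phi_m$ is the orthogonal projection of $\phi$ onto $L^2(M\bs K,m)$. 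Pulling $P_m(s)$ out of the inner sum and inserting this gives the claimed factorization
$$M_f(s) = \left(\sum_{m=0}^\infty P_m(s)\,\|\phi_m\|_2^2\right)\left|\int_\bbR v(t)e^{-st}\,dt\right|^2.$$

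There is no real obstacle here: the lemma is a bookkeeping consequence of the product structure of $f$ combined with the definition of $\hat{f}_{m,l}$ and Parseval's identity. The only mildly nontrivial point is observing that the decomposition $L^2(M\bs K) = \bigoplus_m L^2(M\bs K,m)$ was set up precisely so that the basis $\{\phi_{m,l}\}$ diagonalizes the $m$-grading, which is what lets the sum over $l$ collapse to $\|\phi_m\|_2^2$ with the $m$-dependent weight $P_m(s)$ still attached.
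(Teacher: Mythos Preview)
Your proof is correct and follows essentially the same approach as the paper: compute $\hat{f}_{m,l}(a_{\eta t})=v(t)\langle\phi,\phi_{m,l}\rangle$ from the factorization, pull the $t$-integral out of the sum, and collapse $\sum_l|\langle\phi,\phi_{m,l}\rangle|^2$ to $\|\phi_m\|_2^2$ using that $\{\phi_{m,l}\}_l$ is an orthonormal basis of $L^2(M\bs K,m)$. The paper writes the coefficients explicitly as $c_{m,l}$ rather than invoking Parseval by name, but the argument is identical.
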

\begin{proof}
We can write the projection $\phi_m$ as
\[\phi_m(k)=\sum_{l=-m}^m c_{m,l}\phi_{m,l}(k),\]
with $c_{m,l}=\int_K \phi(k)\overline{\phi_{m,l}(k)}dk$ so that
\[\norm{\phi_m}^2=\sum_{l=-m}^m |c_{m,l}|^2.\]
The factorization of $f$ implies that
$$\hat{f}_{m,l}(a_{\eta t})=v(t)\int_K \phi(k)\overline{\phi_{m,l}(k)}dk=c_{m,l}v(t),$$
so that
\[M_{f}(s)=\sum_{m}P_m(s)\sum_{l=-m}^m | c_{m,l}|^2\left|\int_\bbR v(t)e^{-t}dt\right|^2=\left(\sum_{m} P_m(s)\norm{\phi_m}^2\right)\left|\int_\bbR v(t)e^{-t}dt\right|^2.\]
\end{proof}
As before we can estimate the function
\[P_m(s)=\prod_{k=0}^{m-1} \frac{2(1-s)+k}{2s+k}\asymp_s (m+1)^{2-4s},\]
and the integral $\int_\bbR v_\lambda(t)e^{-st}dt\asymp_s \lambda^{s(3+\epsilon)}$ to get that
\[M_{f^{(\lambda)}}(s)\asymp_s \tilde{M}(\lambda,s)= \lambda^{2s(3+\epsilon)}\sum_{m=0}^\infty \frac{||\phi_m^{(\lambda)}||^2}{(m+1)^{4s-2}}.\]
In particular, we have $M_{f^{(\lambda)}}(s)\lesssim_s ||f^{(\lambda)}||_1^2$ if and only if  $\tilde{M}(\lambda,s)\lesssim_s ||f^{(\lambda)}||_1^2$.

\begin{lem}\label{l:monotone}
There is a constant $\lambda_0$ such that for any $\lambda>\lambda_0$ if $\tilde{M}(\lambda,s_0)\geq \lambda^{3\epsilon/2}$, then $\tilde{M}(\lambda,s)>\tilde{M}(\lambda,s_0)$ for all $s\in (s_0,1)$.
\end{lem}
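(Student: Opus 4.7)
\medskip
\noindent\emph{Plan.} Write $\tilde M(\lambda,s)=\lambda^{2s(3+\epsilon)}A(s)$ with $A(s)=\sum_m\|\phi_m^{(\lambda)}\|^2(m+1)^{2-4s}$, and set $c_m=\|\phi_m^{(\lambda)}\|^2(m+1)^2\ge 0$ so that $A(s)=\sum_m c_m e^{-4s\log(m+1)}$. This last expression is a log-sum-exp in $s$, so $\log A(s)$, and hence $\log \tilde M(\lambda,s)$, is \emph{convex} in $s$, with
\[\frac{d}{ds}\log \tilde M(\lambda,s)\ =\ 2(3+\epsilon)\log\lambda-4\bar L(s),\qquad \bar L(s):=\frac{\sum_m\log(m+1)\,c_m e^{-4s\log(m+1)}}{A(s)}.\]
By convexity, a strictly positive derivative at $s=s_0$ immediately yields $\tilde M(\lambda,s)>\tilde M(\lambda,s_0)$ for all $s\in(s_0,1)$. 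Thus the whole task reduces to establishing $\bar L(s_0)<\tfrac{3+\epsilon}{2}\log\lambda$ for $\lambda\ge\lambda_0$.

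\medskip
\noindent\emph{Reduction to $s=1/2$.} Differentiating $\bar L(s)$ gives $\bar L'(s)=-4\,\mathrm{Var}_s(\log(m+1))\le 0$, so $\bar L$ is non-increasing in $s$ and it suffices to bound $\bar L(1/2)=\sum_m\log(m+1)\|\phi_m^{(\lambda)}\|^2/\|\phi^{(\lambda)}\|_2^2$. This is a concentration statement for the isotypic masses $\|\phi_m^{(\lambda)}\|^2$ as $m$ varies.

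\medskip
\noindent\emph{Sobolev/localization estimate.} The function $\phi^{(\lambda)}(k_{\theta,\alpha,\beta})=\psi(\lambda\sin\theta,\lambda(\alpha-\beta))$ is supported in a region of scale $1/\lambda$ in two of the three Euler-angle directions (it is constant in $\alpha+\beta$). A change of variables using the Jacobian $|\sin 2\theta|$ gives $\|\phi^{(\lambda)}\|_2^2\asymp_\psi\lambda^{-3}$, and each right-invariant vector field on $K$ applied to $\phi^{(\lambda)}$ produces at most an extra factor of $\lambda$. Since the Casimir acts by $m(m+1)$ on $L^2(M\bs K,m)$, this yields the Sobolev estimate
\[\sum_{m\ge 0}m^{2k}\|\phi_m^{(\lambda)}\|^2\ \lesssim_{k,\psi}\ \lambda^{2k-3}\quad\text{for every integer }k\ge 0.\]
Choose $\eta\in(0,(1+\epsilon)/2)$ and split the sum for $\bar L(1/2)$ at $M_*=\lambda^{1+\eta}$. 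The $m\le M_*$ part contributes at most $((1+\eta)\log\lambda+O(1))\|\phi^{(\lambda)}\|_2^2$; for the tail, use $\log(m+1)\le m^\delta$ with $\delta$ small together with the Sobolev bound with $k$ large (depending on $\eta,\delta$) to make the tail $\lesssim_{k}\lambda^{-N}\|\phi^{(\lambda)}\|_2^2$ for any prescribed $N$. Dividing by $\|\phi^{(\lambda)}\|_2^2$ gives $\bar L(s_0)\le\bar L(1/2)\le (1+\eta)\log\lambda+O_\psi(1)$; since $2(1+\eta)<3+\epsilon$, the derivative $2(3+\epsilon)\log\lambda-4\bar L(s_0)$ is strictly positive for $\lambda\ge\lambda_0(\psi,\epsilon)$, finishing the proof.

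\medskip
\noindent\emph{Obstacle and role of the hypothesis.} The main technical hurdle is the Sobolev-type bound $\sum m^{2k}\|\phi_m^{(\lambda)}\|^2\lesssim_k\lambda^{2k-3}$, which requires careful bookkeeping of the Jacobian in the invariant measure on $M\bs K$ and of the effectively two-dimensional localization of $\phi^{(\lambda)}$. The assumption $\tilde M(\lambda,s_0)\ge\lambda^{3\epsilon/2}$ is used to produce the quantitative lower bound $A(s_0)\gtrsim\lambda^{3\epsilon/2-2s_0(3+\epsilon)}$, which smooths out the control of tail contributions to $\bar L(s_0)$ and streamlines the derivation of the key inequality $\bar L(s_0)<(3+\epsilon)\log\lambda/2$.
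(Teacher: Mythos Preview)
Your convexity observation for $\log\tilde M(\lambda,\cdot)$ is correct and elegant, but the Sobolev estimate $\sum_m m^{2k}\|\phi_m^{(\lambda)}\|^2\lesssim_k\lambda^{2k-3}$ is false, and with it the reduction to $\bar L(1/2)$ collapses. The point is that $M\bs K\cong S^2$, and in spherical coordinates $(\vartheta,\gamma)=(2\theta,\alpha-\beta)$ the support of $\phi^{(\lambda)}$ is a \emph{thin sector} near the pole: polar extent $\vartheta\asymp 1/\lambda$ and azimuthal angular extent $|\gamma|\lesssim 1/\lambda$, so the azimuthal \emph{arc length} is only $\asymp 1/\lambda^2$. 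Concretely, the spherical gradient has $\frac{1}{\sin\vartheta}\partial_\gamma\phi^{(\lambda)}\asymp\lambda^2$ on the support (since $\sin\vartheta\asymp 1/\lambda$), hence
\[
\sum_m m(m+1)\|\phi_m^{(\lambda)}\|^2=\|\nabla_{S^2}\phi^{(\lambda)}\|_2^2\asymp\lambda^4\|\phi^{(\lambda)}\|_2^2\asymp\lambda,
\]
already contradicting your bound at $k=1$ (which would give $\lambda^{-1}$). The correct Sobolev bound is $\sum_m m^{2k}\|\phi_m^{(\lambda)}\|^2\lesssim_k\lambda^{4k-3}$, and this forces the cutoff $M_*\gtrsim\lambda^2$, yielding $\bar L(1/2)\sim 2\log\lambda>(3+\epsilon)\log\lambda/2$ for small $\epsilon$. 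So the derivative test at $s=1/2$ fails, and your monotonicity of $\bar L$ only makes things worse when you pass from $s_0$ down to $1/2$.

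This also means the hypothesis $\tilde M(\lambda,s_0)\ge\lambda^{3\epsilon/2}$ is not a mere streamlining device: it is what rescues the argument. The paper works directly at $s$ (never reducing to $s=1/2$), splits the sum $\sum_m\log(m+1)(m+1)^{2-4s}\|\phi_m^{(\lambda)}\|^2$ at $L=\lambda^{3/2+\epsilon/3}$, and bounds the tail using only Parseval, $\sum_{m>L}\|\phi_m^{(\lambda)}\|^2\le\|\phi^{(\lambda)}\|_2^2\asymp\lambda^{-3}$. This yields
\[
\partial_s\tilde M(\lambda,s)\ \ge\ c\,\epsilon\log\lambda\cdot\tilde M(\lambda,s)\ -\ O(\log\lambda\cdot\lambda^{4\epsilon/3}),
\]
and the hypothesis $\tilde M(\lambda,s_0)\ge\lambda^{3\epsilon/2}$ is precisely what makes the first term dominate. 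In place of your convexity, the paper closes with a bootstrap: positivity of the derivative at $s_0$ forces $\tilde M$ to increase, hence remain $\ge\lambda^{3\epsilon/2}$, hence keep the derivative positive on all of $(s_0,1)$. Your convexity argument can substitute for this last step, but only after the derivative at $s_0$ has been controlled using the hypothesis and the cutoff at $\lambda^{3/2+\epsilon/3}$.
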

\begin{proof}
We will show that for all $s\in [s_0,1)$ the derivative
\[\pd{\tilde{M}}{s}(\lambda,s)=2(3+\epsilon)\log(\lambda)\lambda^{2s(3+\epsilon)}\sum_{m=0}^\infty \frac{||\phi^{(\lambda)}_m||^2}{(m+1)^{4s-2}}-4\lambda^{2s(3+\epsilon)}\sum_{m=0}^\infty \frac{\log(m+1)||\phi^{(\lambda)}_m||^2}{(m+1)^{4s-2}}\geq 0.\]
To do this, for any $L>2$ we can bound the sum in the second term by
\[\sum_{m=0}^\infty \frac{\log(m+1)||\phi^{(\lambda)}_m||^2}{(m+1)^{4s-2}}\leq \log(L+1)\sum_{m=0}^{L} \frac{||\phi^{(\lambda)}_m||^2}{(m+1)^{4s-2}}+\frac{\log(L+1)}{(L+1)^{4s-2}}\sum_{m>L}||\phi^{(\lambda)}_m||^2.\]
and using Parseval's identity we can bound the second sum by
\[\frac{\log(L+1)}{(L+1)^{4s-2}}\sum_{m>L}||\phi^{(\lambda)}_m||^2\leq \frac{\log(L+1)}{(L+1)^{4s-2}}\int_K\phi^{(\lambda)}(k)dk\ll \frac{\log(L+1)}{(L+1)^{4s-2}\lambda^3}. \]
We thus get that
\[\sum_{m=0}^\infty \frac{\log(m+1)||\phi^{(\lambda)}_m||^2}{(m+1)^{4s-2}}\leq \log(L+1)\left(\sum_{m=0}^{\infty} \frac{||\phi^{(\lambda)}_m||^2}{(m+1)^{4s-2}}+O(\frac{\log(L+1)}{ L^{4s-2}\lambda^3})\right).\]
Taking $L=\lambda^{\frac{3}{2}+\frac{\epsilon}{3}}-1$ and recalling that $s<1$ we get that 
\[\pd{}{s}\tilde{M}(\lambda,s)\geq \epsilon\log(\lambda)M(\lambda,s)+O(\log(\lambda)\lambda^{4\epsilon/3}).\]
Hence, there is a constant $C>0$ (independent on $s$ or on $\lambda$) such that
$$\pd{}{s}\tilde{M}(\lambda,s)\geq \log(\lambda)(\epsilon M(\lambda,s)-C\lambda^{4\epsilon/3}).$$
Let $\lambda_0$ be large enough so that $\epsilon\lambda_0^{\epsilon/6}>4C$, then, for all $\lambda>\lambda_0$
if $\tilde{M}(\lambda,s_0)\geq \lambda^{3\epsilon/2}$ then $\pd{}{s}\tilde{M}(\lambda,s_0)>\log(\lambda)\lambda^{3\epsilon/2}\epsilon/2>0$ and hence $\tilde{M}(\lambda,s)$ is increasing at $s_0$.
Consequently, $\tilde{M}(\lambda,s)\geq \lambda^{3\epsilon/2} $ and is also increasing for all $s_0\leq s<1$.
\end{proof}

We can now conclude the proof of Theorem \ref{t:ThetaR}.
As mentioned above, it is enough to show that $\tilde{M}(\lambda,s)\lesssim_s \norm{f^{(\lambda)}}_1^2$ for all $s\in(\tfrac{1}{2},1)$.
Assume that there is some $s_0\in (\tfrac{1}{2},1)$ for which this is false. That is, there is a sequence $\lambda_\ell\to\infty$ for which
$\tilde{M}(\lambda_\ell,s_0)/||f^{(\lambda_\ell)}||_1^2\to \infty$. In particular, we have that $\tilde{M}(\lambda_\ell,s_0)\geq \lambda^{3\epsilon/2}$  for all $\ell$ sufficiently large (recall that $\norm{f^{(\lambda)}}_1\asymp \lambda^\epsilon$).
Consequently, Lemma \ref{l:monotone} tells us that $\tilde{M}(\lambda_\ell,s)/||f^{(\lambda_\ell)}||_1^2\to\infty$ for all $s\in(s_0,1)$.
On the other hand, we can find an arithmetic lattice $\Gamma^*\subseteq \SL_2(\bbZ[i])$ such that $\calC_{\Gamma^*}(s)$ has a pole $s_1\in (s_0,1)$ (see e.g.,  \cite{Selberg65}).
For this pole, Proposition \ref{p:Mfbnd} tells us that $\tilde{M}(\lambda, s_1)\lesssim M_{f^{(\lambda)}}(s_1)\lesssim \norm{f^{(\lambda)}}_1^2$
in contradiction.

\subsection{Proof of Proposition \ref{p:ThetaMain}}
We now go back to complete the proof of Proposition \ref{p:ThetaMain}.
Note that for any $f\in C^\infty_c(Q\bs G)$ we have $f=\sum_{m,l}f_{m,l}$ where $f_{m,l}(ak)=\hat{f}_{m,l}(a)\phi_{m,l}(k)$ and that from orthogonality
$\norm{\Theta_f}^2=\sum_{m,l}\norm{\Theta_{f_{m,l}}}^2.$
We can thus reduce the problem to the case where $f=f_{m,l}$ for some fixed $m,l$.
\begin{prop}\label{p:norm}
Let $f\in C^\infty_c(Q\bs G)$ be of the form $f(a_{\eta t}k)=v(t)\phi(k)$
where $v\in C^\infty_c(\bbR)$ and $\phi\in L^2(M\bs K,m)$ for some fixed $m$.
Let $\tfrac{1}{2}<s_p<\ldots<s_1<s_0=1$ denote the poles of $\calC(s)$ and let $c_j=\Res_{s=s_j}\calC(s)$. We then have
\[c_0\sum_{j=0}^p c_j P_m(s_j)\left|\int_\bbR v(t)e^{-s_jt}dt\right|^2\leq \norm{\Theta_f}^2\leq c_0\big(2\norm{f}_2^2+\sum_{j=0}^p c_j P_m(s_j)\left|\int_\bbR v(t)e^{-s_jt}dt\right|^2\big).\]
\end{prop}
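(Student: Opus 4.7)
The plan is to represent $\Theta_f$ as a Mellin contour integral of the Eisenstein series $E(\phi,s,g)$, insert this into the unfolding identity \eqref{e:unfold2}, and then shift contours to the critical line $\Re(s)=\tfrac12$ to expose the exceptional residues of $\calC(s)$.

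Setting $\tilde v(s):=\int_\bbR v(t)\,e^{-st}\,dt$, Laplace inversion gives $v(t)=\frac{1}{2\pi i}\int_{\Re(s)=\sigma}\tilde v(s)\,e^{st}\,ds$ for any $\sigma$. Since $\vphi_s(a_{\eta t}k)=e^{st}$ and both $\vphi_s$ and $\phi$ (lifted to $G$) are $Q$-invariant, this yields
$$f(g)=\frac{1}{2\pi i}\int_{\Re(s)=\sigma}\tilde v(s)\,\vphi_s(g)\phi(g)\,ds.$$
For $\sigma>1$ the series $\sum_{\gamma\in\Gamma_\infty\bs\Gamma}\vphi_s(\gamma g)\phi(\gamma g)=E(\phi,s,g)$ converges absolutely, and because $v\in C^\infty_c$ the transform $\tilde v$ decays rapidly on vertical lines. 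I may therefore interchange summation and contour integral to obtain
$$\Theta_f(g)=\frac{1}{2\pi i}\int_{\Re(s)=\sigma}\tilde v(s)\,E(\phi,s,g)\,ds,$$
and plugging into \eqref{e:unfold2} gives $\|\Theta_f\|^2=\frac{1}{2\pi i}\int_{\Re(s)=\sigma}\tilde v(s)\,J(s)\,ds$, where $J(s)=\int_{\Gamma_\infty\bs G}\overline{f(g)}\,E(\phi,s,g)\,d\sigma(g)$.

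Next I evaluate $J(s)$ over the cusp fundamental domain $\calF_\infty$ in coordinates $g=n_\xbf a_{D\tbf}k$. Because $f$ is $N$-invariant and the cuspidal part of $E(\phi,s,\cdot)$ has vanishing $N$-constant term, only the constant term from Proposition \ref{p:const} contributes. Integrating $\overline{v(t_n)\phi(k)}\,(\vphi_s(a_{D\tbf})+P_m(s)\calC(s)\vphi_{1-s}(a_{D\tbf}))\phi(k)\,e^{-t_n}$ over $\xbf\in\calF_{\calO_\Gamma}$, $t_1,\dots,t_{n-1}\in[-1,1]$, $t_n\in\bbR$ and $k\in K$, using $\|\phi\|_2=1$ together with $|\omega_\Gamma|=2^{n-1}R_\Gamma|\calF_{\calO_\Gamma}|$ and $c_0=|\omega_\Gamma|/v_\Gamma$, collapses to
$$J(s)=c_0\Bigl(\overline{\tilde v(1-\bar s)}+P_m(s)\calC(s)\,\overline{\tilde v(\bar s)}\Bigr).$$
This splits $\|\Theta_f\|^2=I_1+I_2$ with
$$I_1=\frac{c_0}{2\pi i}\int_{\Re(s)=\sigma}\tilde v(s)\,\overline{\tilde v(1-\bar s)}\,ds,\qquad I_2=\frac{c_0}{2\pi i}\int_{\Re(s)=\sigma}\tilde v(s)P_m(s)\calC(s)\,\overline{\tilde v(\bar s)}\,ds.$$

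Finally I shift both contours to $\Re(s)=\tfrac12$. The integrand of $I_1$ is entire, so no residues arise; on the critical line one has $1-\bar s=s$, and Plancherel yields $I_1=\frac{c_0}{2\pi}\int_\bbR|\tilde v(\tfrac12+i\tau)|^2\,d\tau=c_0\|v\|^2_{L^2(\bbR,\,e^{-t}dt)}=c_0\|f\|_2^2$. For $I_2$, $P_m$ has no poles in $\Re(s)\ge\tfrac12$, so contour shifting picks up exactly the simple poles of $\calC(s)$ at $s_0=1,s_1,\ldots,s_p$, with residues $c_0\cdot c_jP_m(s_j)|\tilde v(s_j)|^2$ (note that $P_m(1)=0$ when $m\neq 0$, so the $j=0$ term automatically vanishes in that case). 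The remainder $I_2'$ on $\Re(s)=\tfrac12$ is controlled by the two key bounds $|P_m(\tfrac12+i\tau)|=1$ (since numerator and denominator factors of \eqref{e:Pm} are complex conjugates) and $|\calC(\tfrac12+i\tau)|\le 1$ (unitarity of the scattering matrix); Cauchy--Schwarz and Plancherel then give $|I_2'|\le c_0\|f\|_2^2$. Combining, and using the positivity of $c_j$ and $P_m(s_j)$ on $(\tfrac12,1]$, yields both the upper bound $\|\Theta_f\|^2\le 2c_0\|f\|_2^2+c_0\sum_j c_jP_m(s_j)|\tilde v(s_j)|^2$ and the lower bound $\|\Theta_f\|^2\ge c_0\sum_j c_jP_m(s_j)|\tilde v(s_j)|^2$. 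The main technical hurdle I expect is the careful cusp-direction computation of $J(s)$, where the normalization factors must combine exactly to produce $c_0$; the contour shifts and sum/integral interchanges are routine given $v\in C^\infty_c$ and standard properties of Eisenstein series.
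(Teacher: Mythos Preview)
Your proposal is correct and follows essentially the same approach as the paper: represent $\Theta_f$ as a contour integral of $E(\phi,s,g)$, use the unfolding identity \eqref{e:unfold2} together with the constant term formula of Proposition~\ref{p:const}, shift the contour to $\Re(s)=\tfrac12$ to pick up the residues of $\calC(s)$, and bound the remaining critical-line integral via $|\calC(\tfrac12+i\tau)|\le 1$, $|P_m(\tfrac12+i\tau)|=1$, Cauchy--Schwarz and Plancherel. The only cosmetic difference is the order of operations---the paper first computes the constant term $\int_{\calF_{\calO_\Gamma}}\Theta_f(n_\xbf a k)\,d\xbf$ and then inserts it into \eqref{e:unfold2}, whereas you insert into \eqref{e:unfold2} first and then compute $J(s)$---but the computations are identical.
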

\begin{proof}
Let $\hat{v}(r)=\tfrac{1}{\sqrt{2\pi}}\int_\bbR v(t)e^{-irt}dt$ denote the Fourier transform of $v$; for any $\sigma\in \bbR$ we have
$$v(t)=\tfrac{1}{\sqrt{2\pi}}\int_\bbR \hat{v}(r-i\sigma)e^{it(r-i\sigma)}dr=\tfrac{1}{\sqrt{2\pi}}\int_\bbR \hat{v}(r-i\sigma)\vphi_{\sigma+ir}(a_{\eta t})dr.$$
Consequently we can write,
\[f(g)=\tfrac{1}{\sqrt{2\pi}}\int_\bbR \hat{v}(r-i\sigma)\vphi_{\sigma+ir}(g)\phi(g)dr,\]
and summing over $\Gamma_\infty\bs \Gamma$ we get
\begin{eqnarray*}
\Theta_f(g)&=& \frac{1}{\sqrt{2\pi}}\int_{\bbR}\hat{v}(r-i\sigma)\sum_{\gamma\in \Gamma_\infty\bs \Gamma}\vphi_{\sigma+ir}(\gamma g)\phi(\gamma g)dr\\
&=& \frac{1}{\sqrt{2\pi}}\int_{\bbR}\hat{v}(r-i\sigma)E(\phi,\sigma+ir,g)dr.
\end{eqnarray*}
Integrating this over $\calF_{\calO_\Gamma}$ gives
\begin{eqnarray*}
\lefteqn{\int_{\calF_{\calO_\Gamma}} \Theta_f(n_{\bf{x}}ak)d{\bf{x}} =\frac{1}{\sqrt{2\pi}}\int_{\bbR}\hat{v}(r-i\sigma)\int_{\calF_{\calO_\Gamma}}E(\phi,\sigma+ir,n_{\bf{x}}ak)d{\bf{x}}dr}\\
&&=\frac{|\calF_{\calO_\Gamma}|}{\sqrt{2\pi}}\int_{\bbR}\hat{v}(r-i\sigma)E^o(\phi,\sigma+ir,ak)dr,
\end{eqnarray*}
and using Proposition \ref{p:const} for $E^o(\phi,s,g)$ we get
\begin{eqnarray*}
\int_{\calF_{\calO_\Gamma}} \Theta_f(n_{\bf{x}}ak)d{\bf{x}} &=&\frac{|\calF_{\calO_\Gamma}|\phi(k)}{\sqrt{2\pi}}\bigg(\int_{\bbR}\hat{v}(r-i\sigma)\vphi_{\sigma+ir}(a)dr\\
&+& \int_{\bbR}\hat{v}(r-i\sigma) \calC(\sigma+ir)P_m(\sigma+ir)\vphi_{1-\sigma-ir}(a)dr\bigg).\\
\end{eqnarray*}
Now shift the contour of integration to the line $\sigma=\tfrac{1}{2}$ (picking up possible poles) to get
\begin{eqnarray}\label{e:contour}
\int_{\calF_{\calO_\Gamma}} \Theta_f(n_{\bf{x}}ak)d{\bf{x}} &=&\frac{|\calF_{\calO_\Gamma}|\phi(k)}{\sqrt{2\pi}}\bigg(\int_{\bbR}\hat{v}(r-\tfrac{i}{2})\vphi_{\tfrac{1}{2}+ir}(a)dr\\
\nonumber &+& \int_{\bbR}\hat{v}(r-\tfrac{i}{2}) \calC(\tfrac{1}{2}+ir)P_m(\tfrac{1}{2}+ir)\vphi_{\tfrac{1}{2}-ir}(a)dr\\
\nonumber&+& 2\pi\sum_j c_j P_m(s_j)\hat{v}(-is_j)\vphi_{1-s_j}(a)\bigg).
\end{eqnarray}
We recall the formula \eqref{e:unfold2}, that when written in the coordinates at the cusp reads
\[\norm{\Theta_f}^2=\frac{R_\Gamma}{v_{\Gamma}}\int_K\int_{[-1,1]^{n-1}\times\bbR} \overline{f(a_{D\bf{t}}k)}e^{-t_n}\int_{\calF_{\calO_\Gamma}} \Theta_f(n_{\bf{x}}a_{D\bf{t}}k)d{\bf{x}}d{\bf{t}}dk.\]

Plugging \eqref{e:contour} in this formula, noting that $f(a_{D\bf{t}}k)=v(t_n)\phi(k)$, that $\vphi_s(a_{D\bf{t}})=e^{st_n}$ and recalling that $c_0=\frac{R_{\Gamma}2^{n-1}|\calF_{\calO_\Gamma}|}{v_{\Gamma}}$ (see \eqref{e:residue}) we get
\begin{eqnarray*}\norm{\Theta_f}^2 &=&c_0\bigg( \int_\bbR |\hat{v}(r-\tfrac{i}{2})|^2dr
 +\int_\bbR \hat{v}(r-\tfrac{i}{2})\overline{\hat{v}(-r-\tfrac{i}{2})}\calC(\tfrac{1}{2}+ir)P_m(\tfrac{1}{2}+ir)dr\\
& +& 2\pi\sum_j c_j P_m(s_j)|\hat{v}(-is_j)|^2\bigg).
\end{eqnarray*}
Now, for the first term, by Plancherel, we have
\[\int_\bbR |\hat{v}(r-\tfrac{i}{2})|^2dr=\int_\bbR |v(t)|^2e^{-t}dt=\norm{f}_2^2.\]
Using Cauchy-Schwartz and the fact that $|\calC(\tfrac{1}{2}+ir)|\leq 1$ and $|P_{m}(\tfrac{1}{2}+ir)|=1$, we see that the absolute value of the second term is bounded by the first term.
For the last term we have for each pole $2\pi|\hat{v}(-is_j)|^2=\left|\int_\bbR v(t)e^{-s_jt}dt\right|^2$,
implying the upper and lower bounds for $\norm{\Theta_f}^2$.
%
\end{proof}

We can now conclude the proof of Proposition \ref{p:ThetaMain}. For any $f\in C^\infty_c(Q\bs G)$ let $f_{m,l}(ak)=\hat{f}_{m,l}(a)\phi_{m,l}(k)$ with $\hat{f}_{m,l}$ defined in \eqref{e:fml}. From orthogonality we get that
$$\norm{\Theta_f}^2=\sum_{m,l}\norm{\Theta_{f_{m,l}}}^2,$$
and we can use Proposition \ref{p:norm} to estimate each of the terms $\norm{\Theta_{f_{m,l}}}^2$ separately and sum all the contributions.

First, for the $L^2$-norms we have $2\sum_{m,l}\norm{f_{m,l}}_2^2=2\norm{f}_2^2$.
Next, the contribution of the exceptional poles $\tfrac{1}{2}<s_j<1$ (if they exist) is
$\sum_jc_jM_f(s_j)$. Finally, since $P_m(1)=0$ unless $m=0$, the pole at $s_0=1$ only contributes for $m=l=0$  and its contribution is precisely
\[c_0\left|\int_\bbR \hat{f}_{0,0}(a_{\eta t})e^{-t}dt\right|^2=c_0\left|\int_{Q\bs G}f(g)dg\right|^2\leq c_0\norm{f}_1^2.\]
If we further assume that $f$ is positive, then the last inequality is an equality implying the lower bound.


\section{Logarithm laws}~\label{sec;vol}
Let $X=\Gamma\bs G$ be as above and let $\{u_s\}_{s\in \bbR}\subset G$ denote a one parameter unipotent subgroup.
The goal of this section is to prove Theorem \ref{t:logarithmlaw}, that is, to show that
\[\lims_{s\to\infty}\frac{d(o,x u_s)}{\log(s)}=1 \mbox{ for $\sigma$-a.e. }x\in X.\]

Note that for any conjugated and rescaled flow of the form $\tilde{u}_s=ku_{\lambda s} k^{-1}$ with $k\in K$ and $\lambda>0$ we have that
$$\lims_{s\to\infty}\frac{\d(o,x\tilde{u}_s)}{\log(s)}
=\lims_{s\to\infty}\frac{\d(o,\tilde{x}u_s)}{\log(s)}$$
where $\tilde{x}=xk$. Consequently, we may assume without loss of generality that $\Gamma$ has a cusp at infinity and that the unipotent flow is given by
 \begin{equation}\label{e;unipotent}
 u_s=n_{s\mathbf{y}}^-=(n^{-}_{sy_1},\ldots,n^{-}_{sy_n}) \end{equation}
where $n_x^-=\begin{pmatrix} 1 & 0\\ x & 1\end{pmatrix}$, and $\ybf\in [0,1]^n$ is fixed and satisfies  $\max\{y_j|1\leq j\leq n\}=1$.
Throughout the rest of this section we will fix such a $\ybf$ and a unipotent flow $u_t$ as above.
\subsection{Translates of cusp neighborhoods}\label{s:construct}
We now give the precise constructions of the sets $\frakD_k\subseteq Q\bs G$ and $Y_{\Dfrak_k}\subset \Gamma\bs G$ described in the introduction. For any set $\Dfrak\subset Q\bs G$ denote by $|\Dfrak|$ its measure with respect to the Haar measure on $Q\bs G$. We define the set $Y_{\Dfrak}\subset \Gamma\bs G$ corresponding to $\Dfrak$ by
\begin{equation}\label{e:YD}
 Y_{\Dfrak}=\{\Gamma g\in \Gamma\bs G| Q\gamma g\in \Dfrak \mbox{ for some } \gamma\in \Gamma\}.
\end{equation}

 We will work in slightly greater generality, and fix an arbitrary increasing sequence of real numbers $r_{\ell}\to \infty$ satisfying that $\sum_{\ell} e^{-r_\ell}=\infty$.
For any $k\in \bbN$ let $p(k)\in \bbN$ such that
$$\lim_{k\to\infty}\sum_{\ell=k}^{p(k)}e^{-r_{\ell}}=\infty.$$
Eventually, we will take $r_\ell=(1-\epsilon)\log(\ell)$ in which case we can take $p(k)=2k$.
\begin{defn}\label{def:Dk}
Let $A^1(\tau)=\{a_{\eta t}|t\geq \tau\}$ and define the sets
$$\Dfrak_k=Q\bs \bigcup_{\ell=k}^{p(k)} Q A^1(r_\ell)Ku_{-\ell}\subseteq Q\bs G$$
\end{defn}

\begin{lem}\label{l:Yk}
For any $x\in Y_{\Dfrak_k}$ there is $\ell \geq k$ such that
$$\d(o,xu_\ell)\geq r_\ell+O(1).$$
\end{lem}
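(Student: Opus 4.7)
The plan is to unravel the definition of $Y_{\Dfrak_k}$ and translate the cusp information through the flow $u_\ell$, then apply the Siegel-type distance inequality (Lemma \ref{l:borel-2}) together with the coordinate-free distance estimate \eqref{e:cuspdist} to read off the lower bound.

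First, suppose $x = \Gamma g \in Y_{\Dfrak_k}$. By the definition \eqref{e:YD} there exists $\gamma \in \Gamma$ with $Q\gamma g \in \Dfrak_k$, and by the explicit form of $\Dfrak_k$ in Definition \ref{def:Dk} there exist $\ell$ with $k \leq \ell \leq p(k)$, an element $t \geq r_\ell$, some $k_0 \in K$ and some $q \in Q$ such that
\[
\gamma g u_{-\ell} = q a_{\eta t} k_0, \qquad \text{equivalently} \qquad \gamma g u_\ell^{-1}\cdot u_\ell = \gamma g = q a_{\eta t} k_0 u_\ell.
\]
Rearranging, $\gamma g u_\ell = q a_{\eta t} k_0$ is false; instead from $\gamma g = q a_{\eta t} k_0 u_\ell$ we deduce $\gamma g u_\ell^{-1} \cdot u_\ell = q a_{\eta t} k_0$, i.e.\ the shifted element $\gamma (g u_{-\ell})$ lies in $Q A^1(r_\ell) K$. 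Applying the flow $u_\ell$ on the right and using that $u_\ell$ is a unit, we see that the point $x u_\ell$ has the representative $\gamma g \cdot \mathrm{id} = q a_{\eta t} k_0 u_\ell$, but it is cleaner to observe directly that $\gamma (g u_\ell)\cdot u_{-\ell} \cdot u_\ell = \gamma g u_{-\ell}\cdot u_\ell$, so that the point $x u_\ell = \Gamma g u_\ell$ admits the $\Gamma$-representative $\gamma g u_\ell$ which, by the above, equals $q a_{\eta t} k_0 \cdot u_\ell \cdot u_\ell$; to avoid this confusion I will instead rephrase $\Dfrak_k$ as the set of $Q$-cosets meeting $QA^1(r_\ell)K u_{-\ell}$, so $Q\gamma g \in \Dfrak_k$ means $\gamma g u_\ell \in Q A^1(r_\ell)K$.

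Having fixed the representative $\gamma g u_\ell = q a_{\eta t} k_0$ with $q \in Q$, $t \geq r_\ell$ and $k_0 \in K$, I use that $\omega_\Gamma$ is a fundamental domain for $\Gamma_\infty \bs Q$ (see \eqref{e:FQ}) to write $q = \gamma_\infty q'$ with $\gamma_\infty \in \Gamma_\infty$ and $q' \in \omega_\Gamma$. Since $\Gamma_\infty \subseteq \Gamma$, the element $\gamma' := \gamma_\infty^{-1}\gamma$ is also in $\Gamma$, and
\[
\gamma' g u_\ell = q' a_{\eta t} k_0 \in \omega_\Gamma A^1(r_\ell) K = \calF_\infty(r_\ell) = \calF_1(r_\ell),
\]
lying in the cusp neighborhood of the cusp at infinity (for $k$, and hence $r_\ell$, sufficiently large that we are in the regime of Siegel sets discussed after \eqref{e:CuspT}).

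Now I apply Lemma \ref{l:borel-2}: since $\gamma' g u_\ell \in \calF_1(r_\ell)$, for every $\tilde\gamma \in \Gamma$,
\[
\d_G\bigl(o, \tilde\gamma (\gamma' g u_\ell)\bigr) \geq \d_G(o, \gamma' g u_\ell) - C.
\]
Taking the infimum over $\tilde\gamma$ yields
\[
\d(o, x u_\ell) = \inf_{\tilde\gamma \in \Gamma} \d_G(o, \tilde\gamma g u_\ell) = \inf_{\tilde\gamma \in \Gamma} \d_G\bigl(o, \tilde\gamma (\gamma' g u_\ell)\bigr) \geq \d_G(o, q' a_{\eta t} k_0) - C.
\]
Finally, because $\omega_\Gamma$ is relatively compact and the metric is right $K$-invariant, $\d_G(o, q' a_{\eta t} k_0) = \d_G(o, a_{\eta t}) + O(1)$, and by the normalization \eqref{e:distcusp} this equals $t + O(1) \geq r_\ell + O(1)$. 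Combining gives $\d(o, x u_\ell) \geq r_\ell + O(1)$, as desired.

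There is no real obstacle here; the argument is essentially a bookkeeping exercise using the dictionary between the abstract sets $\Dfrak \subseteq Q\bs G$ and the Siegel-set structure of $\Gamma\bs G$. The only mildly delicate points are (i) verifying that for $k$ large enough the cusp parameter $r_\ell$ exceeds the threshold $\tau_0$ needed in Lemma \ref{l:borel-2} and in \eqref{e:cuspdist}, which is immediate since $r_\ell \to \infty$, and (ii) absorbing the diameter of $\omega_\Gamma$ and the constant from Lemma \ref{l:borel-2} into the single $O(1)$ error.
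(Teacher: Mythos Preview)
Your argument is correct and follows the same route as the paper: unwind the definition of $\Dfrak_k$ to get a representative $\gamma g u_\ell \in QA^1(r_\ell)K$, move into the Siegel set $\omega_\Gamma A^1(r_\ell)K=\calF_\infty(r_\ell)$ by adjusting by an element of $\Gamma_\infty$, and then invoke \eqref{e:cuspdist} and \eqref{e:distcusp} (the paper cites these directly rather than re-deriving them from Lemma~\ref{l:borel-2} as you do). The only issue is expository: the second paragraph is a tangle of false starts---the correct unwinding is simply that $Q\gamma g\in \Dfrak_k$ means $\gamma g\in QA^1(r_\ell)K\,u_{-\ell}$ for some $k\le\ell\le p(k)$, hence $\gamma g\,u_\ell\in QA^1(r_\ell)K$---so delete everything before ``so $Q\gamma g\in\Dfrak_k$ means $\gamma g u_\ell\in QA^1(r_\ell)K$'' and start there.
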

\begin{proof}
From the construction, any $x\in Y_{\Dfrak_k}$ can be written as
$x=\Gamma gu_{-\ell}$ for some $\ell\geq k$ and $g\in Q A^1(r_\ell)K$.
Moreover, replacing $g$ if necessary with $\gamma g$ for a suitable $\gamma\in \Gamma_\infty$ we can take $g$ from the Siegel set $\omega A^1(r_\ell)K$. Now write $g=qa_{\eta t}k$ with $q\in \omega,\;k\in K$ and $t\geq r_\ell$, then from \eqref{e:cuspdist} and \eqref{e:distcusp} we have that indeed
\[\d(o,xu_{\ell})=\d_G(o,a_{\eta t})+O(1)=t+O(1)\geq r_\ell+O(1).\]
\end{proof}

\begin{lem}\label{l:vol}
$\lim_{k\to\infty} |\Dfrak_k|=\infty$.
\end{lem}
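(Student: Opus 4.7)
The plan is to bound $|\Dfrak_k|$ from below via the Cauchy--Schwarz inequality applied to $\chi:=\sum_{\ell=k}^{p(k)}\id_{S_\ell}$, where $S_\ell:=QA^1(r_\ell)Ku_{-\ell}\subseteq Q\bs G$. Since $\supp\chi=\Dfrak_k$, one has
\[
|\Dfrak_k|\geq \frac{\bigl(\sum_{\ell}|S_\ell|\bigr)^2}{\sum_\ell|S_\ell|+2\sum_{j<\ell}|S_j\cap S_\ell|}.
\]
A direct computation shows that $Q$ is unimodular, so the Haar measure on $Q\bs G$ is right-$G$-invariant and $|S_\ell|=|QA^1(r_\ell)K|=e^{-r_\ell}$. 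The numerator therefore tends to $\infty$ by the construction of $p(k)$, and it suffices to bound the denominator.

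Using the identification $Q\bs G\cong \bbR\times M\bs K$ from \eqref{e:QHaar}, write $g=a_{\eta t}k$. The $QA^1K$-decomposition defines a height $\phi\colon G\to\bbR$ satisfying $\phi(a_{\eta t}g')=t+\phi(g')$, so $g\in S_\ell$ iff $t\geq r_\ell-h_\ell(k)$ with $h_\ell(k):=\phi(ku_\ell)$. Hence
\[
|S_j\cap S_\ell|=\int_K\min\bigl(e^{h_j(k)-r_j},\,e^{h_\ell(k)-r_\ell}\bigr)\,dk\leq e^{-(r_j+r_\ell)/2}\int_K e^{(h_j+h_\ell)/2}\,dk.
\]
The key geometric estimate to prove is
\[
\int_K e^{(h_j+h_\ell)/2}\,dk\;\lesssim\;\frac{\log|\ell-j|}{|\ell-j|}.
\]
For an $\SL_2(\bbR)$-factor, $h_m(k_\theta)=-\log(c_m^2+d_m^2)$ with $(c_m,d_m)$ the bottom row of $k_\theta u_m$; the Cauchy--Schwarz identity
\[
(c_j^2+d_j^2)(c_\ell^2+d_\ell^2)\;\geq\;(c_jd_\ell-c_\ell d_j)^2=\bigl((\ell-j)y\cos^2\theta\bigr)^2
\]
controls the integrand by $1/(|\ell-j|y\cos^2\theta)$ away from $\theta=\pi/2$. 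Near this singular direction, the local expansion $c_m^2+d_m^2=(1+my\gamma)^2+O(\gamma^4)$ (with $\gamma=\theta-\pi/2$) reduces the remaining contribution to $\int d\gamma/[(1+jy\gamma)(1+\ell y\gamma)]$, which is evaluated by partial fractions and produces the logarithmic factor. The $\SL_2(\bbC)$-factors are handled analogously, and the general product case factor by factor.

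Combining these bounds, $\sum_{j<\ell\leq p(k)}|S_j\cap S_\ell|\lesssim\bigl(\sum_\ell e^{-r_\ell}\bigr)\cdot(\log p(k))^{O(1)}$, so the Cauchy--Schwarz estimate yields $|\Dfrak_k|\to\infty$.

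The main obstacle is this overlap bound: the naive estimate $|S_j\cap S_\ell|\leq\sqrt{|S_j||S_\ell|}$ fails by an entire factor of $|\ell-j|$, and one must genuinely exploit the fact that $u_j$ and $u_\ell$ have distinct $K$-parts in the Iwasawa decomposition (this is the source of the $1/|\ell-j|$ factor above), uniformly across the singular direction of $K$ where both cusp-going functions $h_j$ and $h_\ell$ simultaneously peak.
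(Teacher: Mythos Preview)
Your approach is quite different from the paper's, and while the $\SL_2(\bbR)$ overlap estimate you sketch is essentially correct, the final Cauchy--Schwarz step does not close in the generality in which the lemma is stated.

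The paper's argument avoids overlap analysis entirely by switching from the $K$-coordinates on $Q\bs G$ to the Bruhat coordinates $Qg=Qa_{\eta t}n^-_{\xbf}$ (Haar measure $e^{-t}\,dt\,d\xbf$). In these coordinates $u_{-\ell}$ acts simply by $\xbf\mapsto\xbf-\ell\ybf$, so after replacing $K$ by a fixed bounded neighborhood $B^-\subset N^-$ of the identity (at the cost of a constant shift in height, $QA^1(\tau-c)B^-\subseteq QA^1(\tau)K$), the translated boxes $QA^1(r_\ell-c)B^-u_{-\ell}$ are manifestly \emph{disjoint}: the integer part of the $j_0$-th coordinate of $\xbf$ (where $y_{j_0}=1$) recovers $\ell$. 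This yields $|\Dfrak_k|\gtrsim\sum_{\ell=k}^{p(k)}e^{-r_\ell}\to\infty$ immediately, with no second-moment input.

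In your route, summing the overlap bound over $k\leq j<\ell\leq p(k)$ (using $e^{-(r_j+r_{j+d})/2}\leq e^{-r_j}$ for fixed $d=\ell-j$) gives $\sum_{j<\ell}|S_j\cap S_\ell|\lesssim N\bigl(\log(p(k)-k)\bigr)^2$ with $N=\sum_\ell e^{-r_\ell}$, hence only $|\Dfrak_k|\gtrsim N/\bigl(\log(p(k)-k)\bigr)^2$. For $r_\ell=(1-\epsilon)\log\ell$, $p(k)=2k$ this suffices, but the lemma is stated for an \emph{arbitrary} increasing sequence with $\sum e^{-r_\ell}=\infty$. Take $e^{-r_\ell}=1/(\ell\log\ell)$: any admissible choice of $p(k)$ forces $\log p(k)$ to grow much faster than $N\asymp\log\log p(k)-\log\log k$, and your lower bound tends to $0$. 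This loss is intrinsic to the Cauchy--Schwarz strategy when the unipotent is nontrivial in a single $\SL_2(\bbR)$ factor: in the coordinate $u=\tan\theta$ the sets $S_\ell$ are Cauchy-like bumps centered at $u=\ell$, and even the \emph{exact} value of $\int\chi^2$ exceeds $N$ by a factor $\asymp\log(p(k)-k)$, so no sharpening of the $\min\leq\sqrt{\,\cdot\,}$ step will rescue the argument in full generality.
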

\begin{proof}
Let $N^{-}\subseteq G$ denote the group of lower triangular matrices. We note that $NMAN^-=\{\smatrix{a}{b}{c}{d}\in G| d\neq 0\}$ is a Zariski open dense subset of $G$ containing the identity. Thus a Zariski open dense subset of $Q\ba G$ has representative of the form $Qg=Qa_{\eta t} \nxn,$ where $\nxn\in N^-$ and $a_{\eta t}\in A^1.$ We also note that the Haar measure on $Q\ba G$ in these coordinate is given by $e^{-t}d\hh t\hh d\hh\xbf$ (up to a normalizing constant).

Let $B^-=\{n^-_{\bf{x}}| \max_j|x_j|<1\}$ denote a fixed neighborhood of the identity in $N^{-}$.
A simple computation then shows that there exists an absolute constant $c>0$ such that for any $\tau>c$ we have that
\begin{equation}\label{e;affineinclusion}
QA^1(\tau-c)B^-\subseteq QA^1(\tau)K
\end{equation}

Let $u_s=n_{s\ybf}^-$ be as above.
Fix some $j_0$ with $y_{j_0}=1$ and for any $\xbf\in\bbR^{r_1}\times\bbC^{r_2}$ let $\ell_{\xbf}=[\Re(x_{j_0})]\in \bbZ$.
We define the set
\begin{equation}~\label{e:tildeD}\tilde{\Dfrak}_k=\left\{Qa_{\eta t}\nxn \in Q\bs QAN^-\h\bigg\arrowvert\h \begin{array}{l}
 k\leq \ell_{\xbf}\leq p(k)\\ |x_j+\ell_{\xbf}y_j|\leq 1  \\ t\geq  r_{\ell_{\xbf}}-c
\end{array}\right\}\end{equation}
Note that $a_{\eta t}\nxn u_{s}=a_{\eta t}n_{s\ybf+\xbf}^-$ and hence
\begin{equation}
\tilde{\Dfrak}_k\subseteq Q\bs  \bigcup_{\ell=k}^{p(k)} QA^1(\tau-c)B^-u_{-\ell} \subseteq Q\bs  \bigcup_{\ell=k}^{p(k)} QA^1(r_\ell)Ku_{-\ell}=\Dfrak_k.
\end{equation}

Finally, the volume of $\tilde{\Dfrak}_k$ can be explicitly computed and it satisfies
\[|\tilde{\Dfrak}_k|\asymp \int_{k}^{p(k)}\int_{r_{[x]}}^{\infty}e^{-t}dtdx\asymp \sum_{\ell=k}^{p(k)}e^{-r_\ell}.\]
So in particular $|\Dfrak_k|\rightarrow\infty$ as $k\to\infty$.
\end{proof}


Next we want to bound the measure of $Y_{\Dfrak_k}$ from below.
\begin{lem}\label{l:meas-mink}
Assume that $\Gamma$ is arithmetic. Then there is a constant $\kappa_\Gamma>0$ depending only on $\Gamma$ such that $\sigma(Y_\Dfrak)\geq \kappa_\Gamma$
for all $\Dfrak\subseteq Q\bs G$ with $|\Dfrak|> 1$ and $|\partial \Dfrak|=0$.
\end{lem}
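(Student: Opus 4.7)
The plan is to apply the theta function machinery of the previous section to (an approximation of) the indicator function $f=\id_\Dfrak$. The key observation is that if $f$ is supported on $\Dfrak$, then $\Theta_f(g)=\sum_{\gamma\in\Gamma_\infty\bs\Gamma} f(\gamma g)$ vanishes outside $Y_\Dfrak$, since $\Theta_f(\Gamma g)\neq 0$ forces some $\gamma\in\Gamma$ with $Q\gamma g\in\Dfrak$. Hence by Cauchy--Schwarz,
\[
\left(\int_{\Gamma\bs G}\Theta_f\, d\sigma\right)^{2}\leq \sigma(Y_\Dfrak)\,\norm{\Theta_f}^2,
\]
and it suffices to bound the left-hand side from below and $\norm{\Theta_f}^2$ from above.

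For the left-hand side, the unfolding identity of Lemma~\ref{l:unfolding} (with $F\equiv 1$), combined with the factorization of Haar measure on $\Gamma_\infty\bs G$ over the relatively compact fiber $\Gamma_\infty\bs Q$ of volume $|\omega_\Gamma|$, gives
\[
\int_{\Gamma\bs G}\Theta_f\, d\sigma=\int_{\Gamma_\infty\bs G} f\, d\sigma=\frac{|\omega_\Gamma|}{v_\Gamma}\,\norm{f}_1=c_0\norm{f}_1,
\]
where $c_0=|\omega_\Gamma|/v_\Gamma$ is the residue at $s=1$ appearing in \eqref{e:residue}. For the right-hand side, since $\Gamma$ is arithmetic, Theorem~\ref{t:thetabound} yields $\norm{\Theta_f}^2\lesssim_\Gamma\norm{f}_2^2+\norm{f}_1^2$. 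For $f=\id_\Dfrak$ these are $|\Dfrak|$ and $|\Dfrak|^2$ respectively, so when $|\Dfrak|>1$ the norm squared is at most $2C_\Gamma|\Dfrak|^2$. Combining the two bounds gives
\[
\sigma(Y_\Dfrak)\geq \frac{c_0^2|\Dfrak|^2}{C_\Gamma(|\Dfrak|^2+|\Dfrak|)}\geq \frac{c_0^2}{2C_\Gamma}=:\kappa_\Gamma,
\]
which depends only on $\Gamma$ as required.

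The one technical point, and the only place care is needed, is that Theorem~\ref{t:thetabound} is stated for smooth positive $f\in C^\infty_c(Q\bs G)$, whereas $\id_\Dfrak$ is merely measurable. To handle this I would approximate $\id_\Dfrak$ from below by a sequence $f_n\in C^\infty_c(Q\bs G)$ of positive functions with $\supp f_n\subseteq \Dfrak$ and $f_n\uparrow \id_\Dfrak$ pointwise on the interior of $\Dfrak$. The hypothesis $|\partial\Dfrak|=0$ together with monotone convergence then forces $\norm{f_n}_1\to|\Dfrak|$ and $\norm{f_n}_2^2\to|\Dfrak|$. Each $\Theta_{f_n}$ is still supported on $Y_\Dfrak$, so applying the argument above to $f_n$ and letting $n\to\infty$ yields the stated bound. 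The real content of the lemma is just the coupling of the unfolding identity of Lemma~\ref{l:unfolding} with the arithmetic theta bound of Theorem~\ref{t:thetabound}; this may be viewed as a ``Minkowski-type'' statement showing that sets of large Haar measure on $Q\bs G$ cannot be too sparsely distributed on $\Gamma\bs G$.
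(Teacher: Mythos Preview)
Your argument is correct and follows essentially the same route as the paper: use that $\Theta_f$ is supported on $Y_\Dfrak$, apply Cauchy--Schwarz, compute the $L^1$-norm via unfolding (Lemma~\ref{l:unfolding}), and bound the $L^2$-norm via Theorem~\ref{t:thetabound}. The only cosmetic difference is the handling of the smoothness hypothesis in Theorem~\ref{t:thetabound}: you approximate $\id_\Dfrak$ from \emph{below} by smooth $f_n$ with $\supp f_n\subseteq\Dfrak$ and pass to the limit, whereas the paper approximates from \emph{above} by a smooth $f\geq\id_\Dfrak$ (after reducing to relatively compact $\Dfrak$) and uses the pointwise inequality $\Theta_{\id_\Dfrak}\leq\Theta_f$ to transfer the $L^2$-bound; both devices are equally valid and yield the same conclusion.
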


\begin{proof}
Without loss of generality we may assume that $\Dfrak$ is relatively compact (otherwise replace it with a relatively compact subset satisfying the same assumptions).
Let $\id_\Dfrak\in L^2(Q\bs G)$ denote the characteristic function of $\Dfrak$ that we lift to a $Q$-left-invariant function on $G$ which we continue to denote by $\id_{\Dfrak}$. Let $f\in C_c^\infty(Q\bs G)$ take values in $[0,1]$ and approximate $\id_{\Dfrak}$ (from above) in $L^1$ sufficiently well so that
$|\Dfrak|\leq \norm{f}_1\leq 2|\Dfrak|$. Let $\Theta_{\id_{\Dfrak}}$ and $\Theta_{f}$ be the corresponding theta functions. Our normalization of the Haar measures give the following form of the Siegel's integral formula (see Lemma \ref{l:unfolding})
\begin{equation}~\label{e;L^1norm}\int_{\homog}\Theta_{f}(g) d\sigma(g)=\frac{|\omega_{\Gamma}|}{v_{\Gamma}}\int_{Q\ba G}f(g) dg.\end{equation}
Note that $\Theta_{\id_{\Dfrak}}$ is supported on $Y_\Dfrak$ and use Cauchy-Schwartz inequality to get
\begin{equation}\label{e;cauchy-sch}
(\frac{|\omega_{\Gamma}|}{v_{\Gamma}})^2|\Dfrak|^2=\left(\int_{\Gamma\bs G}\Theta_{\id_\Dfrak}(g)\hh d\hh \sigma(g)\right)^2=\left(\int_{Y_\Dfrak}\Theta_{\id_{\Dfrak}}\hh d\hh \sigma(g)\right)^2\leq \sigma(Y_\Dfrak)\norm{\Theta_{\id_\Dfrak}}^2.\end{equation}
Next, we can bound $\norm{\Theta_{\id_\Dfrak}}^2\leq\norm{\Theta_f}^2$ and from Theorem \ref{t:thetabound} we have that
\begin{equation*}
\norm{\Theta_{f}}^2\lesssim_{\Gamma} \norm{f}_2^2+\norm{f}_1^2.
\end{equation*}
Finally, bound $\norm{f}_2^2\leq \norm{f}_1\leq 2|\Dfrak|$ to get that there is a constant $\kappa>0$ depending only on $\Gamma$ such that 
$\sigma(Y_\Dfrak)\geq \kappa$.
\end{proof}
\begin{rem}
For $\Gamma=\SL_2(\calO_K)$ (or in any other case where $\calC_\Gamma(s)$ has no exceptional poles) we can get an explicit constant
$\sigma(Y_\Dfrak)\geq \frac{|\omega_\Gamma||\Dfrak|}{|\omega_\Gamma||\Dfrak|+v_\Gamma}$.
In particular, in these cases we have that  $\sigma(Y_{\Dfrak})\to 1$ as $|\Dfrak|\to \infty$.
\end{rem}

In the non arithmetic case, we use the same argument with the specific choice of functions $f^{(\lambda)}$ given in Theorem \ref{t:ThetaR}. Here we specialize to the case where the sequence $r_\ell=(1-\epsilon)\log(\ell)$ and observe that, writing the sets $\tilde{\Dfrak}_k$ defined in \eqref{e:tildeD} in polar coordinates one can show that for
the case $G=\SL_2(\bbR)$ we have
\[\set{Qa_{t}k_\theta| t\in [-(1+\epsilon)k,0],\; \tfrac{1}{2}\leq k\theta \leq 1}\subseteq \tilde{\Dfrak}_k,\]
and for the case $G=\SL_2(\bbC)$
\[\set{Qa_{\eta t}k_{\theta,\alpha,\beta}| t\in [-(3+\epsilon)k,0],\; \tfrac{1}{2}\leq k\sin(\theta)\leq 1,\; k|\alpha-\beta|\leq 1}\subset \tilde{\Dfrak}_k.\]
In both cases these sets can be approximated by $f^{(\lambda_k)}$ with $\lambda_k\asymp k$. We thus get the following
\begin{lem}\label{l:meas-mink2}
Let $G=\SL_2(\bbR)$ or $\SL_2(\bbC)$ and let $\Gamma\subset G$ denote a non uniform lattice.
For any $\epsilon>0$ let $r_\ell=(1-\epsilon)\log(\ell)$ and let
$\Dfrak_k=Q\bs \bigcup_{\ell=k}^{2k} Q A^1(r_\ell)Ku_{-\ell}\subseteq Q\bs G.$
Then there is a constant $\kappa>0$ such that $\sigma(Y_{\Dfrak_k})>\kappa$ for all $k\in \bbN$.
\end{lem}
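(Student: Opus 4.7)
The strategy mirrors the Cauchy--Schwartz argument in the proof of Lemma~\ref{l:meas-mink}, but since $\Gamma$ is not assumed arithmetic we must replace the general theta bound of Theorem~\ref{t:thetabound} with the specialized estimate of Theorem~\ref{t:ThetaR}, applied to the particular family $\{f^{(\lambda)}\}$. The plan is to choose $\lambda_k\asymp k$, exhibit $f^{(\lambda_k)}$ as a non-negative function supported inside $\Dfrak_k$, and then combine the Siegel-type integral formula from Lemma~\ref{l:unfolding} with the $L^2$-bound of Theorem~\ref{t:ThetaR} to conclude.

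First, using the two inclusions displayed just before the statement of the lemma, together with a suitable choice of the fixed bump function $\psi$ (supported, say, in $[\tfrac12,1]$ in the real case and in $[\tfrac12,1]\times[-1,1]$ in the complex case), I would verify that for $\lambda_k=k$ the function $f^{(\lambda_k)}$ is supported inside the lift to $Q\bs G$ of $\tilde{\Dfrak}_k\subseteq\Dfrak_k$. A direct computation in these coordinates using the Haar measure $dg=e^{-t}dt\,dk$ on $Q\bs G$ from \eqref{e:QHaar} then yields
\[
\norm{f^{(\lambda_k)}}_1\asymp k^{\epsilon}\quad\text{and}\quad\norm{f^{(\lambda_k)}}_2^2\asymp k^{\epsilon},
\]
in both the $\SL_2(\bbR)$ and $\SL_2(\bbC)$ cases; the complex case differs only by an additional angular factor, normalized identically.

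Next, setting $\Theta_k=\Theta_{f^{(\lambda_k)}}$, this is a non-negative function supported in $Y_{\Dfrak_k}$. Lemma~\ref{l:unfolding} (applied with $F\equiv 1$) gives the Siegel-type identity $\int_{\Gamma\bs G}\Theta_k\,d\sigma=\tfrac{|\omega_\Gamma|}{v_\Gamma}\,\norm{f^{(\lambda_k)}}_1$, and Cauchy--Schwartz together with the support restriction yields
\[
\Bigl(\tfrac{|\omega_\Gamma|}{v_\Gamma}\Bigr)^2\,\norm{f^{(\lambda_k)}}_1^2\;\leq\;\sigma(Y_{\Dfrak_k})\,\norm{\Theta_k}^2.
\]
Theorem~\ref{t:ThetaR} bounds the right-hand side by a $\Gamma,\psi,\epsilon$-dependent constant times $\norm{f^{(\lambda_k)}}_1^2+\norm{f^{(\lambda_k)}}_2^2\asymp k^{2\epsilon}$, while the left-hand side is also $\asymp k^{2\epsilon}$. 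The $k$-dependence cancels exactly, producing a uniform lower bound $\sigma(Y_{\Dfrak_k})\geq\kappa>0$.

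The main technical obstacle is the support verification: one must check that the explicit polar-type boxes listed just before the lemma actually lie inside $\tilde{\Dfrak}_k$ as defined in \eqref{e:tildeD}. This requires translating between the $NAN^{-}$ parametrization used to define $\tilde{\Dfrak}_k$ and the $NAK$-coordinates $(t,\theta)$ or $(t,\theta,\alpha,\beta)$ in which $f^{(\lambda)}$ is written, and tracking how the right-translation by $u_{-\ell}$ acts on $N^{-}$. Once this coordinate chase is in place, the rest of the argument is the formal unfolding/Cauchy--Schwartz machinery already used in Lemma~\ref{l:meas-mink}, and it is the matched asymptotics $\norm{f^{(\lambda)}}_1\asymp\norm{f^{(\lambda)}}_2^2\asymp\lambda^\epsilon$ built into the very definition of $f^{(\lambda)}$ that makes the final estimate sharp enough to give a uniform positive lower bound.
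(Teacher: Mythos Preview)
Your proposal is correct and follows essentially the same approach as the paper: the paper's proof (contained in the paragraph immediately preceding the lemma) likewise observes that the polar boxes displayed there sit inside $\tilde{\Dfrak}_k\subseteq\Dfrak_k$, takes $\lambda_k\asymp k$ so that $f^{(\lambda_k)}$ is supported in $\Dfrak_k$, and then reruns the Cauchy--Schwartz/Siegel-formula argument of Lemma~\ref{l:meas-mink} with Theorem~\ref{t:ThetaR} in place of Theorem~\ref{t:thetabound}. You have also correctly identified the coordinate change between the $NAN^-$ and $NAK$ parametrizations as the only nontrivial verification, and the norm asymptotics $\norm{f^{(\lambda)}}_1\asymp\norm{f^{(\lambda)}}_2^2\asymp\lambda^\epsilon$ as the mechanism that makes the bound uniform in $k$.
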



\subsection{Proof of Theorem~\ref{t:logarithmlaw}}~\label{seclog}
We now complete the proof of Theorem ~\ref{t:logarithmlaw}.
For the sake of completeness we will prove both the upper and lower bound.

First for the upper bound. Fix $\e>0$ and let $r_{\ell}=(1+\e)\log(\ell)$. The sets
$$B_{r_\ell}=\{x|\d(o,x)\geq r_\ell\},$$ satisfy
\[\sum_{\ell=1}^\infty\sigma(B_{r_\ell}u_{-\ell})=\sum_{\ell=1}^\infty\sigma(B_{r_\ell})\asymp \sum_{\ell=1}^\infty\frac{1}{\ell^{1+\e}}<\infty.\]
Consequently, by Borel-Cantelli for $\sigma$-a.e. $x\in X$ we have that $\#\{\ell|xu_\ell\in B_{r_\ell}\}<\infty$ and hence
$$\lims_{\ell\rightarrow\infty}\frac{\d(o,x\hh u_\ell)}{\log \ell}\leq 1+\vare\h\h\mbox{for}\h\h\sigma\mbox{-a.e.}\h\h x\in X.$$
Since for all $x\in X$ all $s\in\bbR$ we have for $\ell=[s]$
$$|\d(o,x u_s)-\d(o,x u_{\ell})|\leq \d(xu_s,xu_{\ell})\leq \d_G(u_s,u_{\ell})=O(1),$$
we may replace the discrete limit over $\ell\in \bbN$ with a continuous limit over $s\in \bbR$.
Finally, since this holds for every $\e>0$ we get that
$$\lims_{s\rightarrow\infty}\frac{\d(o,x\hh u_s)}{\log s}\leq 1\h\h\mbox{for}\h\h\sigma\mbox{-a.e.}\h\h x\in X.$$

Next for the lower bound. Fix $\e>0$ and let $r_{\ell}=(1-\epsilon)\log(\ell)$.
Let $\Dfrak_k$ and $Y_{\Dfrak_k}$ be as above. We then have that, by Lemma \ref{l:vol}, $|\frakD_k|>1$ for $k$ sufficiently large and hence by Lemma \ref{l:meas-mink} (or Lemma \ref{l:meas-mink2} in the non-arithmetic case) there is some $\kappa>0$ such that $\sigma(Y_k)\geq \kappa>0$ for all $k$. Moreover, by lemma \ref{l:Yk},
for any $x\in Y_k$ there is some $\ell>k$ such that $\d(o, xu_\ell)\geq r_\ell$.

Let $Y=\cap_{\ell=1}^\infty \cup_{k=\ell}^\infty Y_k$.
Then $\sigma(Y)\geq \kappa$ and for every $x\in Y$ there is a sequence $\ell_k\to\infty$ such that $\d(o,xu_{\ell_k})\geq r_{\ell_k}$. Consequently, we have that
\[Y\subseteq \{x\in X|\lims_{s\to\infty}\frac{\d(o,xu_s)}{\log(s)}>1-\e\}.\]
But the latter set is invariant under the action of the flow $\{u_s\}_{s\in \bbR}$ and hence must have full measure.
Consequently, for $\sigma$-a.e. $x\in X$ we have that $\lims_{\ell\rightarrow\infty}\frac{\d(o,x\hh u_\ell)}{\log \ell}\geq 1-\vare$ and since this is true for any $\e>0$ we get that indeed
$$\lims_{s\rightarrow\infty}\frac{\d(o,x\hh u_s)}{\log s}= 1\h\h\mbox{for}\h\h\sigma\mbox{-a.e.}\h\h x\in X.$$

\begin{rem}
We remark that, in the arithmetic setting, we can repeat the same arguments with any increasing sequence of real numbers $\{r_{\ell}\}_{\ell\in\bbN}$ (instead of $r_\ell=(1\pm\epsilon)\log(\ell)$). Consequently, the same proof shows that for any such sequence the set $\{\ell| x u_{\ell}\in B_{r_{\ell}}\}$ is finite (respectively infinite) for $\sigma$-a.e. $x\in X$ if and only if the sequence $\sum_{\ell} \sigma(B_{r_\ell})$ converges (respectively diverges). That is, we show that the family of cusp neighborhoods $\mathfrak{B}=\{B_r|r>0\}$ is Borel-Cantelli for the unipotent flow.
\end{rem}


\end{document}